\tikzset{every picture/.append style={>=latex}}
\tikzset{every node/.append style={fill=white,font=\footnotesize,inner sep=2pt}}
\tikzset{every loop/.style={min distance=10pt,looseness=10,in=70,out=110}}
\newcommand{\nn}{\mathbb{N}} 
\newcommand{\zz}{\mathbb{Z}} 
\newcommand{\isom}{\cong} 
\newcommand{\free}{F} 
\newcommand{\freepro}{\widehat{F}} 
\newcommand{\emptyw}{\varepsilon} 
\newcommand{\ret}[1]{\mathcal{R}_{#1}} 
\newcommand{\green}[1]{\mathcal{#1}} 
\newcommand{\retsubs}[2]{#1_{#2}} 
\newcommand{\loo}[1]{\theta_{#1}} 
\newcommand{\var}[1]{\mathbf{#1}} 
\newcommand{\rest}[2]{r_{#2}({#1})} 
\newcommand{\automaton}[1]{\mathscr{#1}} 
\newcommand{\given}{ : } 
\newcommand{\from}{\colon} 
\DeclareMathOperator{\Card}{Card} 
\DeclareMathOperator{\img}{Im} 
\DeclareMathOperator{\End}{End} 
\DeclareMathOperator{\sol}{sol}
\DeclareMathOperator{\nil}{nil}
\DeclareMathOperator{\cl}{Cl}
\DeclareMathOperator{\lcm}{lcm}
\theoremstyle{plain}
\newtheorem{theorem}{Theorem}[section]
\newtheorem{lemma}[theorem]{Lemma}
\newtheorem{proposition}[theorem]{Proposition}
\newtheorem{corollary}[theorem]{Corollary}
\theoremstyle{definition}
\newtheorem{definition}[theorem]{Definition}
\theoremstyle{remark} 
\newtheorem{remark}[theorem]{Remark}
\newtheorem{example}[theorem]{Example}
\begin{document}

\title{Freeness of Schützenberger groups of primitive substitutions}  

\thanks{This work was conducted with the support of the Centre for Mathematics of the University of Coimbra (UIDB/00324/2021, funded by the Portuguese Government through FCT/MCTES) and the Centre for Mathematics of the University of Porto (UIDB/00144/2020, funded by the Portuguese Government through FCT/MCTES). The author also acknowledges the financial support of the Portuguese Government through a PhD grant (PD/BD/150350/2019)}

\author[H. Goulet-Ouellet]{Herman Goulet-Ouellet}

\address{University of Coimbra, CMUC, Department of Mathematics, Apartado 3008, EC Santa Cruz, 3001-501 Coimbra, Portugal}

\email{hgouletouellet@student.uc.pt}

\subjclass[2010]{20E18, 37B10, 20E05, 20M05}

\keywords{Profinite groups, Invertible substitutions, Schützenberger groups, Return words}

\begin{abstract}
    Our main goal is to study the freeness of Schützenberger groups defined by primitive substitutions. Our findings include a simple freeness test for these groups, which is applied to exhibit a primitive invertible substitution with corresponding non-free Schützenberger group. This constitutes a counterexample to a result of Almeida dating back to 2005. We also give some early results concerning relative freeness of Schützenberger groups, a question which remains largely unexplored.
\end{abstract}

\maketitle

\section{Introduction}
\label{s:intro}

In \cite{Almeida2007}, Almeida unveiled a connection between symbolic dynamical systems, or shift spaces, and maximal subgroups of free profinite monoids. More precisely, he proved that the topological closure inside the free profinite monoid of the language of a minimal shift space contains a unique regular $\green{J}$-class. By standard results from semigroup theory, all the maximal subgroups contained in a regular $\green{J}$-class define the same group up to isomorphism, known as its \emph{Schützenberger group}. In a profinite monoid, the Schützenberger group of a regular $\green{J}$-class is a profinite group. Thus, Almeida's correspondence associates to each minimal shift space a profinite group, and this defines a conjugacy invariant \cite{Costa2006}. 

In the study of Schützenberger groups corresponding to minimal shift spaces, the freeness question has been a recurring theme \cite{Almeida2007,Almeida2013,Almeida2016,Costa2011}. These groups are known to be free for the family of dendric shift spaces, also known as tree sets \cite[Theorem~6.5]{Almeida2016}. Notably, these include Arnoux-Rauzy shift spaces \cite[Example~3.2]{Berthe2015}, as well as shift spaces defined by regular interval exchange \cite[Theorem~4.3]{Berthe2015b}. On the other hand, failure of freeness was also observed, for instance in the shift space defined by the Thue--Morse substitution \cite[Theorem~7.6]{Almeida2013}. This raises the general question: when is the Schützenberger group defined by a minimal shift space free? At time of writing, this question remains largely open. A partial answer was proposed early on by Almeida, which argued that the Schützenberger group of a primitive invertible substitution must be free \cite[Corollary~5.7]{Almeida2007}. However, upon closer inspection, we noticed some gaps in the proof. This prompted us to investigate more closely the freeness question for Schützenberger groups of primitive substitutions, with an eye on the specific case of invertible substitutions. This paper aims to present the results of this investigation, which include a counterexample to \cite[Corollary~5.7]{Almeida2007}.

The paper is organized as follows. In Section~\ref{s:preliminaries}, we review some relevant background. In Section~\ref{s:presentations}, we discuss the notion of $\omega$\hyp{}presentation (a type of profinite presentation introduced in \cite{Almeida2013}) and we give a number of technical results. In Section~\ref{s:freeness}, we examine the link between freeness and $\omega$\hyp{}presentations. The main result of this section, Theorem~\ref{t:det}, provides a simple test for freeness of Schützenberger groups of primitive substitutions. Several examples are presented for which the test can be succesfully applied. In Section~\ref{s:relfree}, we study the Schützenberger groups of relatively invertible primitive substitutions, and more precisely the pseudovarieties generated by the finite quotients of such Schützenberger groups. The main result of this section has two consequences that are of particular interest to us. First, if a primitive substitution is invertible, then its Schützenberger group is relatively free if and only if it is absolutely free. Second, if a primitive substitution is unimodular and its Schützenberger group is relatively free, then it must be free with respect to a pseudovariety containing at least all finite nilpotent groups. Finally, Section~\ref{s:invnonfree} presents our counterexample to \cite[Corollary~4.7]{Almeida2007}, which consists of a primitive invertible substitution whose Schützenberger group is not free, and in fact not relatively free by the results of Section~\ref{s:relfree}.

\section{Preliminaries}
\label{s:preliminaries}

This section aims to provide some context and present most of the relevant background. Additional notions will be introduced in the course of the paper as they are needed. The monograph \cite{Almeida2020a} contains an in-depth treatment of most of the material we need. Here is a list of more specialized documents that may also be useful: on profinite groups and profinite presentations, \cite{Lubotzky2001,Ribes2010a}; on profinite semigroups and Schützenberger groups of primitive substitutions, \cite{Almeida2005,Almeida2007, Almeida2013}; on return sets and return substitutions, \cite{Durand1998,Durand1999}.

By an \emph{alphabet}, we mean a finite set $A$ whose elements are called \emph{letters}. We use $A_n$ as a shorthand for the alphabet $\{0,\dots,n-1\}$, $n\in\nn$. Let $\free(A)$ be the free group on the alphabet $A$ and $\freepro(A)$ be the free profinite group on $A$. We use the notation $\emptyw$ to denote the identity element of both $\free(A)$ and $\freepro(A)$, as well as the empty word. We denote by $\End(\free(A))$ the set of endomorphisms of $\free(A)$, and by $\End(\freepro(A))$ the set of continuous endomorphisms of $\freepro(A)$. An endomorphism $\phi\in\End(\free(A))$ admits a unique continuous extension $\widehat{\phi}\in\End(\freepro(A))$, called the \emph{profinite extension} of $\phi$. 

In this paper, we deal with profinite presentations in the sense of \cite{Lubotzky2001}. Formally, a \emph{presentation} of a profinite group $G$ is a pair formed by a set $A$ and a subset $R\subseteq\freepro(A)$ such that $G\isom\freepro(A)/N$, where $N$ is the closed normal subgroup of $\freepro(A)$ generated by $R$. We call $A$ the set of \emph{generators} and $R$ the set of \emph{relations}. The notation $G\isom\langle  A \mid R \rangle$ means that $(A,R)$ is a presentation of $G$. The minimal number of generators in a presentation of $G$ is denoted $d(G)$. A presentation realizing this minimum is called a \emph{minimal presentation}.

A \emph{substitution} is an endomorphism $\varphi$ of the free monoid $A^*$ over an alphabet $A$. Assuming $A$ has at least two letters, we say that $\varphi$ is \emph{primitive} if there exists $n\in\nn$ such that $b$ occurs in $\varphi^n(a)$, for all $a,b\in A$. On the other hand, if $A$ is a one-letter alphabet, then we say that $\varphi$ is \emph{primitive} if $\varphi(a) = a^n$ with $n>1$. A substitution $\varphi\from A^*\to A^*$ is called \emph{invertible} if its natural extension to an endomorphism of $\free(A)$ is an automorphism. Note that, if $A$ is a singleton, the only invertible substitution is the identity mapping, which is not primitive according to our definition. Thus, a primitive invertible substitution is always defined on at least two letters.

Following \cite[Section~5.5]{Almeida2020a}, a primitive substitution $\varphi\from A^*\to A^*$ defines a minimal shift space $X(\varphi)\subseteq A^\zz$. The language of this shift space, which we denote $L(\varphi)$, is the subset of $A^*$ formed by the factors of the words $\varphi^n(a)$ for all $n\in\nn$ and $a\in A$. Minimality of $X(\varphi)$ means that $L(\varphi)$ must be \emph{uniformly recurrent}. That is, $L(\varphi)$ is infinite, closed under taking factors, and satisfies the \emph{bounded gap property}: for all $u\in L(\varphi)$, there exists $n\in\nn$ such that $u$ is a factor of every word $w\in L(\varphi)$ with $|w|\geq n$. We say that $\varphi$ is \emph{periodic} if $X(\varphi)$ is a periodic shift space, or equivalently if $L(\varphi)$ is the language of factors in the powers of a given word $w\in A^+$. Otherwise, we say that $\varphi$ is \emph{aperiodic}.

Let $\widehat{A^*}$ be the free profinite monoid over an alphabet $A$. A result of Almeida shows that if $L\subseteq A^*$ is uniformly recurrent, then $\overline{L}\setminus A^*$ is a $\green{J}$-maximal regular $\green{J}$-class of $\widehat{A^*}$, where $\overline{L}$ is the topological closure of $L$ in $\widehat{A^*}$ \cite[Propositon~5.6.14]{Almeida2020a}. This in fact gives a bijective correspondence between uniformly recurrent languages (and thus minimal shift spaces) and $\green{J}$-maximal regular $\green{J}$-classes of $\widehat{A^*}$ \cite[Proposition~5.6.12]{Almeida2020a}. Standard results from semigroup theory imply that the maximal subgroups contained in $\overline{L}\setminus A^*$ are all isomorphic to the same profinite group, which is called the \emph{Schützenberger group} of the $\green{J}$-class (see for instance \cite[Section~3.6]{Almeida2020a}). In case $L = L(\varphi)$ is the language of a primitive substitution, we denote this group by $G(\varphi)$ and we call it the \emph{Schützenberger group of $\varphi$}. Note that if $\varphi$ is periodic, then $G(\varphi)$ is a free profinite group of rank 1 \cite[Exercise~5.20]{Almeida2020a}, so from now on we focus on the aperiodic case.

Two-sided return substitutions, introduced in \cite{Durand1999}, play an important role in the study of Schützenberger groups of primitive substitutions \cite{Almeida2013}. It is based on the notion of return words, which we recall now. Let $\varphi$ be a primitive substitution and $u,v\in L(\varphi)$ be such that $uv\in L(\varphi)$. By a \emph{return word} to $(u,v)$ in $L(\varphi)$, we mean a word $r\in A^*$ that separates two consecutive occurrences of $(u, v)$ in $L(\varphi)$. More precisely, it is a word $r\in A^*$ such that $urv$ is in $L(\varphi)$, starts and ends with $uv$, and contains exactly two occurrences of $uv$. The set of such words is denoted $\ret{u,v}$, and we call this a \emph{return set} of $\varphi$. For primitive substitutions, the return sets are always finite and non-empty (by uniform recurrence of $L(\varphi)$, see \cite[Proposition~4.2]{Berthe2015a}). Moreover, they generate free submonoids of $A^*$, for which they form bases. In other words, the return sets of primitive substitutions are \emph{codes} \cite[Lemma~17]{Durand1999}. A further property worth mentionning is that a primitive substitution is periodic if and only if one of its return sets is a singleton, if and only if all but finitely many of its return sets are singletons (see \cite[Proposition~2.8]{Durand1998} and \cite[Proposition~4.4]{Berthe2015a}).

By a \emph{connection}\footnote{The term \emph{connection} was coined by Almeida in \cite{Almeida2007}, where it was used under the condition $|u|=|v|=1$.} of a primitive substitution $\varphi$, we mean a pair of non-empty words $(u,v)$ such that $uv\in L(\varphi)$ and, for some positive integer $l$, $\varphi^l(u)$ ends with $u$ and $\varphi^l(v)$ starts with $v$. The least positive integer $l$ with that property is called the \emph{order} of the connection. If $(u,v)$ is a connection of $\varphi$ of order $k$, then $\varphi^k$ restricts to a primitive substitution of the free submonoid generated by $\ret{u,v}$ \cite[Lemma~21]{Durand1999}. This substitution, which we denote $\retsubs{\varphi}{u,v}$, is said to be a \emph{return substitution} of $\varphi$. All primitive substitutions have at least one connection, hence at least one return substitution \cite[Proposition~5.5.10]{Almeida2020a}.

Following the convention used in \cite{Durand1998}, we relabel return substitutions using the natural ordering of return words induced by leftmost occurrences. This ordering may be defined as follows. Let $(u,v)$ be a connection of order $k$ of a primitive substitution $\varphi$. First, by uniform recurrence of $L(\varphi)$, there exists $n\in\nn$ such that the word $u\varphi^{nk}(v)$ contains every word of the form $urv$, $r\in\ret{u,v}$. For $r,s\in\ret{u,v}$, we say that \emph{$r$ precedes $s$ in the leftmost occurrence ordering} if the leftmost occurrence of $urv$ in $u\varphi^{nk}(v)$ is located to the left of every occurrence of $usv$. Because $u\varphi^{nk}(v)$ is a prefix of $u\varphi^{mk}(v)$ whenever $m\geq n$, this ordering is independent of $n$. We view this as a bijection
\begin{equation*}
    \loo{u,v}\from A_{u,v}\to\ret{u,v}, \qquad \text{ where } A_{u,v} = \{0,\dots,\Card(\ret{u,v})-1\}.
\end{equation*}
The return substitution $\retsubs{\varphi}{u,v}$ can be defined as the unique substitution of $A_{u,v}^*$ satisfying the relation
\begin{equation*}
    \loo{u,v}\circ\retsubs{\varphi}{u,v} = \varphi^k\circ\loo{u,v},
\end{equation*}
where $\loo{u,v}$ is extended to an homomorphism $\loo{u,v}\from A_{u,v}^*\to A^*$.

\section{\texorpdfstring{$\omega$}{Omega}-presentations}
\label{s:presentations}

We recall that the continuous endomorphisms of a finitely generated profinite group form a profinite monoid (see for instance \cite[Section~3.12]{Almeida2020a})\footnote{For historical context, Hunter proved in \cite{Hunter1983} that the monoid of continuous endomorphisms of a finitely generated profinite semigroup is profinite for the compact-open topology. This result was rediscovered by Almeida \cite{Almeida2005} and generalized by Steinberg \cite{Steinberg2010}.}. This implies that for every such continuous endomorphism $\psi$, the closure of $\{\psi^{n} \given n\in\nn\}$ contains a unique idempotent element, which is denoted $\psi^\omega$. More information about $\omega$\hyp{}powers, including basic properties, can be found in \cite[Section~3.7]{Almeida2020a}. We now give the eponymous definition of this section.

\begin{definition}\label{d:omega}
    Let $G$ be a profinite group. An \emph{$\omega$\hyp{}presentation} of $G$ is a profinite presentation of the form
    \begin{equation*}
        G \isom \langle A \mid \widehat{\phi}^\omega(a)a^{-1} \given a\in A \rangle,
    \end{equation*}
    where $A$ is a finite set and $\phi\in\End(\free(A))$. We then say that $\phi$ \emph{defines an $\omega$\hyp{}presentation} of $G$.
\end{definition}

The number of generators of an $\omega$\hyp{}presentation of $G$ defined by an endomorphism of $\free(A)$ is equal to $\Card(A)$. Hence, such an $\omega$\hyp{}presentation is minimal as a presentation of $G$ precisely when $\Card(A) = d(G)$. We call this a \emph{minimal $\omega$\hyp{}presentation}. We also note that the following alternative notation is sometimes used for $\omega$\hyp{}presentations, using relations instead of relators, for instance in \cite{Almeida2013}:
\begin{equation*}
    G \isom \langle A \mid \widehat{\phi}^\omega(a)=a \ (a\in A) \rangle.
\end{equation*}

The next lemma gives a different way to interpret $\omega$\hyp{}presentations. We use essentially the same argument as \cite[Proposition~1.1]{Lubotzky2001}, where it was attributed to Kovács. \begin{lemma}\label{l:kovacs}
    If $\phi\in\End(\free(A))$ defines an $\omega$\hyp{}presentation of a profinite group $G$, then $G\isom\img(\widehat{\phi}^\omega)$.
\end{lemma}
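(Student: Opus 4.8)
The plan is to prove the sharper statement that the closed normal subgroup $N$ of $\freepro(A)$ generated by $\{\widehat{\phi}^\omega(a)a^{-1}\given a\in A\}$ coincides with the kernel of the continuous idempotent endomorphism $e:=\widehat{\phi}^\omega$. Granting this, the conclusion is immediate: the corestriction $e\from\freepro(A)\to\img(\widehat{\phi}^\omega)$ is a continuous surjective homomorphism of profinite groups (the image is closed, being the continuous image of a compact space, and it is a subgroup since $e$ is a homomorphism), so the first isomorphism theorem gives $G\isom\freepro(A)/N=\freepro(A)/\ker(e)\isom\img(\widehat{\phi}^\omega)$. This is essentially the argument of Kovács recalled in \cite[Proposition~1.1]{Lubotzky2001}.

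The inclusion $N\subseteq\ker(e)$ is the easy half. Since $e$ is idempotent, $e(e(a))=e(a)$ for every $a\in A$, whence $e\bigl(\widehat{\phi}^\omega(a)a^{-1}\bigr)=e(a)e(a)^{-1}=\emptyw$. Thus every generator of $N$ lies in the closed normal subgroup $\ker(e)$, and therefore so does $N$.

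For the reverse inclusion, which is the heart of the matter, I would consider the set
\[
    S=\{x\in\freepro(A)\given e(x)x^{-1}\in N\}.
\]
It is closed, being the preimage of the closed set $N$ under the continuous map $x\mapsto e(x)x^{-1}$, and it contains $A$ by the very definition of $N$. A short computation shows $S$ is a subgroup: if $e(x)x^{-1}$ and $e(y)y^{-1}$ lie in $N$, then $e(xy)(xy)^{-1}=\bigl(e(x)x^{-1}\bigr)\bigl(x\,e(y)y^{-1}\,x^{-1}\bigr)\in N$ by normality of $N$, and likewise $e(x^{-1})(x^{-1})^{-1}=\bigl(x^{-1}(e(x)x^{-1})x\bigr)^{-1}\in N$. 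Since $A$ generates $\freepro(A)$ topologically, we conclude $S=\freepro(A)$; that is, $e(x)\equiv x\pmod N$ for every $x$. Applying this to $x\in\ker(e)$ gives $x^{-1}=e(x)x^{-1}\in N$, so $\ker(e)\subseteq N$, and together with the previous step $N=\ker(e)$.

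The only delicate point is this last step: the congruence $e(x)\equiv x\pmod N$ is not visibly stable under infinite products, so rather than arguing "by induction on the letters of $x$" one should package it, as above, as the assertion that the \emph{closed} subgroup $S$ contains the topological generating set $A$. Everything else is routine manipulation in $\freepro(A)$ together with the standard isomorphism theorem for profinite groups.
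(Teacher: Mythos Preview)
Your proof is correct and follows essentially the same approach as the paper: both reduce to showing $N=\ker(\widehat{\phi}^\omega)$, handle the inclusion $N\subseteq\ker(\widehat{\phi}^\omega)$ via idempotence, and obtain the reverse inclusion by verifying that $\{x\in\freepro(A)\given \widehat{\phi}^\omega(x)x^{-1}\in N\}$ is a closed subgroup containing $A$. The only difference is that you spell out the subgroup verification which the paper leaves as a routine check.
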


\begin{proof}
    Let $\psi=\widehat{\phi}$. It suffices to show that the closed normal subgroup $K$ of $\freepro(A)$ generated by $\{ \psi^\omega(a)a^{-1} \given a \in A\}$ is equal to $\ker(\psi^\omega)$. Since $\psi^\omega$ is an idempotent endomorphism, the following equalities hold:
    \begin{equation*}
        \psi^\omega(\psi^\omega(a)a^{-1}) = \psi^\omega(\psi^\omega(a))\psi^{\omega}(a^{-1}) = \psi^\omega(a)\psi^{\omega}(a^{-1}) = \psi^\omega(aa^{-1}) = \emptyw.
    \end{equation*}
    Therefore, $K$ is contained in $\ker(\psi^\omega)$. 

    To prove the reverse inclusion, we show that $K$ contains every element of the form $\psi^\omega(x)x^{-1}$ with $x\in\freepro(A)$. The desired inclusion clearly follows since $x\in\ker(\psi^\omega)$ implies $x^{-1} = \psi^\omega(x)x^{-1}$. Consider the following subset of $\freepro(A)$:
    \begin{equation*}
        \{ x\in\freepro(A) \given \psi^\omega(x)x^{-1}\in K \}.
    \end{equation*}
    Routine arguments show that this set forms a closed subgroup of $\freepro(A)$ which contains $A$. Hence, it must be equal to $\freepro(A)$, and this finishes the proof.
\end{proof}

Our motivation for introducing $\omega$\hyp{}presentations is a key result due to Almeida and Costa, which is stated below. It allows to effectively compute an $\omega$\hyp{}presentation for the Schützenberger group of every primitive aperiodic substitution, and will serve as our starting point in Section~\ref{s:invnonfree}. The original statement is restricted to connections $(u,v)$ satisfying $|u|=|v|=1$, but the proof works as long as $u, v\neq\emptyw$. 
\begin{theorem}[{\cite[Theorem~6.2]{Almeida2013}}]\label{t:return}
    Let $\varphi$ be a primitive aperiodic substitution and $(u,v)$ be a connection of $\varphi$. Then $G(\varphi)$ has the following $\omega$-presentation:
    \begin{equation*}
        G(\varphi)\isom \langle A_{u,v} \mid \widehat{\retsubs{\varphi}{u,v}}^\omega(a)a^{-1} \given a \in A_{u,v} \rangle.
    \end{equation*}
\end{theorem}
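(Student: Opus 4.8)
The plan is to apply Lemma~\ref{l:kovacs}: writing $\psi = \retsubs{\varphi}{u,v}$, $B = A_{u,v}$, and $k$ for the order of the connection, it reduces the statement to producing an isomorphism $G(\varphi) \isom \img(\widehat{\psi}^\omega)$. The argument will take place inside $\widehat{A^*}$, localised at a suitable idempotent of the $\green{J}$-maximal regular $\green{J}$-class $J = \overline{L(\varphi)} \setminus A^*$, and will rest on two ingredients recalled in Section~\ref{s:preliminaries}: that $\ret{u,v}$ is a finite code, so that $\loo{u,v}$ is injective and its profinite extension $\widehat{\loo{u,v}} \from \widehat{B^*} \to \widehat{A^*}$ is a homeomorphism onto the closed submonoid $\overline{\ret{u,v}^*}$; and the return-substitution identity $\loo{u,v} \circ \psi = \varphi^k \circ \loo{u,v}$, which lifts by uniqueness of profinite extensions to $\widehat{\loo{u,v}} \circ \widehat{\psi}^n = \widehat{\varphi}^{kn} \circ \widehat{\loo{u,v}}$ for all $n$, hence --- passing to the limit along a net defining $\widehat{\psi}^\omega$, and using that the matching limit of $\widehat{\varphi}^{kn}$ is $\widehat{\varphi}^\omega$, the common idempotent power of $\widehat{\varphi}$ and $\widehat{\varphi}^k$ --- to $\widehat{\loo{u,v}} \circ \widehat{\psi}^\omega = \widehat{\varphi}^\omega \circ \widehat{\loo{u,v}}$.

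I would then fix an idempotent $e \in J$ adapted to the connection, obtained as an idempotent power of a limit of products of return words, so that $e$ lies in $\overline{\ret{u,v}^*}$ and its one-sided contexts match those imposed by the connection. Since every maximal subgroup of $J$ is isomorphic to $G(\varphi)$, it suffices to identify the maximal subgroup $H_e$ at $e$ with $\img(\widehat{\psi}^\omega)$. For $b \in B$, put $\gamma_b = e\,\loo{u,v}(b)\,e$. One checks that $\gamma_b \in H_e$, and, more substantially, that $z \mapsto e\,\loo{u,v}(z)\,e$ is a monoid homomorphism $B^* \to H_e$ with dense image; this uses the synchronisation property of the return code, namely that every profinite word $\green{H}$-equivalent to $e$ lies in $\overline{\ret{u,v}^*}$ and hence factors over $\ret{u,v}$. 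In this way we obtain a continuous epimorphism $q \from \freepro(B) \twoheadrightarrow H_e$ with $q(b) = \gamma_b$.

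The theorem then follows once $\ker q = \ker \widehat{\psi}^\omega$, for then $H_e \isom \freepro(B)/\ker\widehat{\psi}^\omega = \img(\widehat{\psi}^\omega)$. For the inclusion $\ker\widehat{\psi}^\omega \subseteq \ker q$, evaluate $q$ on the relator $\widehat{\psi}^\omega(b)\,b^{-1}$: using the lifted identity and continuity, the claim $q(\widehat{\psi}^\omega(b)) = \gamma_b$ reduces to the equality $e\,\widehat{\varphi}^\omega(\loo{u,v}(b))\,e = e\,\loo{u,v}(b)\,e$ in $H_e$, which holds because $\varphi^k(u)$ ends with $u$ and $\varphi^k(v)$ starts with $v$, so that $e$ absorbs the $\widehat{\varphi}^\omega$-context around $\loo{u,v}(b)$; Lemma~\ref{l:kovacs} then yields the inclusion. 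For the reverse inclusion, use that $H_e \subseteq \overline{\ret{u,v}^*}$: the map $r := \widehat{\psi}^\omega \circ p \circ \widehat{\loo{u,v}}^{-1}$, restricted to $H_e$ (where $p \from \widehat{B^*} \to \freepro(B)$ is the canonical projection), is a continuous homomorphism $H_e \to \img(\widehat{\psi}^\omega)$; a short computation, using that $p$ kills the idempotent $\widehat{\loo{u,v}}^{-1}(e)$ of $\widehat{B^*}$, shows $r(\gamma_b) = \widehat{\psi}^\omega(b)$, so that $r \circ q = \widehat{\psi}^\omega$ and therefore $\ker q \subseteq \ker\widehat{\psi}^\omega$.

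I expect the main obstacle to be the two appeals to the combinatorics of return words: showing that $H_e \subseteq \overline{\ret{u,v}^*}$, that is, that every element $\green{H}$-equivalent to $e$ factors as a profinite word over the return set (a synchronisation/recognisability property of the code), and showing that $e$ absorbs $\widehat{\varphi}^\omega$-contexts --- precisely the step that needs $(u,v)$ to be a genuine connection, so that $\varphi^k$ preserves the relevant one-sided infinite contexts. Everything else is bookkeeping with idempotents and $\green{H}$-classes, together with continuity of profinite extensions. Finally, the restriction $|u| = |v| = 1$ in the original source is never used: $u$ and $v$ enter only through the return set and the return substitution, which, as recalled in Section~\ref{s:preliminaries}, exist with all the invoked properties --- finiteness, codicity, primitivity of $\psi$ --- for an arbitrary connection with $u, v \neq \emptyw$.
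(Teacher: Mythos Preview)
The paper does not contain a proof of Theorem~\ref{t:return}: it is quoted from \cite[Theorem~6.2]{Almeida2013} and accompanied only by the remark that the original argument, stated there for connections with $|u|=|v|=1$, goes through whenever $u,v\neq\emptyw$. There is therefore no proof in the present paper against which to compare your proposal.

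That said, your outline is in the spirit of the Almeida--Costa argument, and the reduction via Lemma~\ref{l:kovacs} is legitimate: the proof of that lemma establishes unconditionally that the closed normal subgroup generated by the relators $\widehat{\psi}^\omega(a)a^{-1}$ equals $\ker(\widehat{\psi}^\omega)$, so exhibiting $G(\varphi)\isom\img(\widehat{\psi}^\omega)$ is indeed equivalent to the desired $\omega$-presentation. Your identification of the two genuine obstacles is also accurate. However, a few of the steps you label ``bookkeeping'' are not: that $z\mapsto e\,\loo{u,v}(z)\,e$ is a \emph{homomorphism} on $B^*$ (rather than merely a map) requires a stability argument inside the $\green{J}$-class, not just codicity of $\ret{u,v}$; the surjectivity of $q$ and the inclusion $H_e\subseteq\overline{\ret{u,v}^*}$ rest on a characterisation of $H_e$ in terms of return words that is itself one of the main technical results of \cite{Almeida2013}; and the claim that $\widehat{\loo{u,v}}$ is a homeomorphism onto $\overline{\ret{u,v}^*}$ needs the fact that profinite extensions of injective monoid homomorphisms between free monoids remain injective, which is true but not immediate. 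Your plan is a faithful high-level map of \cite[Section~6]{Almeida2013}, and you correctly note that the restriction $|u|=|v|=1$ plays no role; filling in the flagged steps amounts to reproducing that section.
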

 
In other words, every return substitution of $\varphi$ defines an $\omega$-presentation of $G(\varphi)$.

\begin{remark}\label{r:proper}
    Assume that $\varphi$ is also \emph{proper}, meaning that there are $a_1,a_2\in A$ and $n\in\nn$ such that $\varphi^n(b)\in a_1A^*\cup A^*a_2$ for all $b\in A$. Then $G(\varphi)$ has the more straightforward $\omega$\hyp{}presentation $G(\varphi)\isom\langle A \mid \widehat{\varphi}^\omega(a)a^{-1} \given a\in A \rangle$ \cite[Theorem~6.4]{Almeida2013}. That is to say, $\varphi$ defines an $\omega$-presentation of its own Schützenberger group. Further noting that return substitutions are always proper \cite[Lemma~21]{Durand1999}, it follows that a return substitution $\retsubs{\varphi}{u,v}$ defines an $\omega$-presentation of both $G(\varphi)$ and $G(\retsubs{\varphi}{u,v})$. Hence, the two Schützenberger groups are isomorphic.
\end{remark}

\begin{example}\label{e:morse1}
    The \emph{Thue--Morse substitution} is the binary substitution $\tau$ defined by
    \begin{equation*}
        \begin{array}{rlll}
            \tau\from & 0 & \mapsto & 01\\
                      & 1 & \mapsto & 10.
        \end{array}
    \end{equation*}
    This substitution is clearly primitive and it is well known to be aperiodic. Moreover, it is easily verified that the pairs $(0,1)$, $(0,10)$ are connections of $\tau$ of order 2. Computing the corresponding return substitutions (for instance using the algorithm described in Section~\ref{s:invnonfree}), one obtains the following substitutions, both defined on the alphabet $A_4 = \{0,1,2,3\}$:
    \begin{equation*}
        \begin{array}{rlll}
            \retsubs{\tau}{0,1}\from & 0 & \mapsto & 0123\\
                            & 1 & \mapsto & 013\\
                            & 2 & \mapsto & 02123\\
                            & 3 & \mapsto & 0213
        \end{array}\quad
        \begin{array}{rlll}
            \retsubs{\tau}{0,10}\from & 0 &\mapsto & 01\\
                             & 1 &\mapsto & 023132\\
                             & 2 &\mapsto & 0232\\
                             & 3 &\mapsto & 0131.
        \end{array}
    \end{equation*}

    By Theorem~\ref{t:return}, the substitutions $\retsubs{\tau}{0,1}$ and $\retsubs{\tau}{0,10}$ define $\omega$\hyp{}presentations of the Schützenberger group $G(\tau)$, which means
    \begin{equation*}
        G(\tau) \isom \langle A_4 \mid \widehat{\retsubs{\tau}{0,1}}^\omega(a)a^{-1} \given a\in A_4\rangle\isom \langle A_4 \mid \widehat{\retsubs{\tau}{0,10}}^\omega(a)a^{-1} \given a\in A_4\rangle.
    \end{equation*}
\end{example}

As it was observed in \cite{Almeida2013}, the $\omega$\hyp{}presentations given by return substitutions (or indeed by the substitution itself in the proper case) are not always minimal. We now introduce a simple method for reducing the number of generators in $\omega$\hyp{}presentations. Let $\phi$ be an element of $\End(\free(A))$. We denote by $\rest{\phi}{n}$ the restriction of $\phi$ to an endomorphism of $\img(\phi^n)$.
\begin{proposition}\label{p:restriction}
    If $\phi\in\End(\free(A))$ defines an $\omega$\hyp{}presentation of a profinite group $G$, then, for every non-negative integer $n$, the endomorphism $\rest{\phi}{n}$ defines an $\omega$\hyp{}presentation of $G$ with at most $\Card(A)$ generators.
\end{proposition}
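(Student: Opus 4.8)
The plan is to reformulate $\omega$\hyp{}presentations via Lemma~\ref{l:kovacs} and then to transport $\rest{\phi}{n}$ through the embedding of $\img(\phi^n)$ into $\free(A)$. Two points are immediate. First, $\phi$ maps $H:=\img(\phi^n)$ into itself, since $\phi^{n+1}(\free(A))\subseteq\phi^n(\free(A))$, so $\rest{\phi}{n}$ is a genuine endomorphism of $H$; by Nielsen--Schreier $H$ is free, and being generated by the $\Card(A)$ words $\phi^n(a)$ ($a\in A$) it is free of rank at most $\Card(A)$, so a basis $B$ of $H$ has $\Card(B)\le\Card(A)$ and we regard $\rest{\phi}{n}$ as an element of $\End(\free(B))$. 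Second, the computation in the proof of Lemma~\ref{l:kovacs} uses no hypothesis on the endomorphism: for any finite set $B$ and any $\chi\in\End(\free(B))$, the closed normal subgroup of $\freepro(B)$ generated by $\{\,\widehat{\chi}^{\,\omega}(b)b^{-1}\given b\in B\,\}$ is $\ker(\widehat{\chi}^{\,\omega})$, so $\chi$ defines an $\omega$\hyp{}presentation of $\img(\widehat{\chi}^{\,\omega})$. Taking $\chi=\rest{\phi}{n}$ shows that $\rest{\phi}{n}$ defines an $\omega$\hyp{}presentation of $\img\bigl(\widehat{\rest{\phi}{n}}^{\,\omega}\bigr)$ with at most $\Card(A)$ generators, while Lemma~\ref{l:kovacs} applied to $\phi$ gives $G\isom\img(\widehat{\phi}^{\,\omega})$. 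So everything reduces to the isomorphism $\img\bigl(\widehat{\rest{\phi}{n}}^{\,\omega}\bigr)\isom\img(\widehat{\phi}^{\,\omega})$.

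To prove it, I would put $\psi=\widehat{\phi}$, $e=\psi^\omega$, and let $\overline{H}$ be the closure of $H$ in $\freepro(A)$. Since $\psi^n=\widehat{\phi^n}$, the set $\img(\psi^n)$ is compact, hence closed, and contains $H$, while conversely $\psi^n(\freepro(A))\subseteq\overline{\psi^n(\free(A))}=\overline{H}$; therefore $\overline{H}=\img(\psi^n)$. Also $\img(e)\subseteq\img(\psi^n)$, because $e$ is a limit of powers $\psi^m$ with $m\to\infty$ and $\img(\psi^n)$ is closed. Hence $\psi$ and $e$ both restrict to continuous endomorphisms of $\overline{H}$; the restriction $e|_{\overline{H}}$ is idempotent and is a limit of the powers $(\psi|_{\overline{H}})^m$, so by uniqueness of the idempotent power $(\psi|_{\overline{H}})^\omega=e|_{\overline{H}}$. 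Consequently
\begin{equation*}
    \img\bigl((\psi|_{\overline{H}})^\omega\bigr)=e(\overline{H})=e\bigl(\img(\psi^n)\bigr)=\img(e),
\end{equation*}
the last equality holding since $e(\img(\psi^n))\subseteq e(\freepro(A))=\img(e)$ and, conversely, $\img(e)=e(\img(e))\subseteq e(\img(\psi^n))$ by $\img(e)\subseteq\img(\psi^n)$ and idempotency of $e$.

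It remains to identify $\psi|_{\overline{H}}$ with $\widehat{\rest{\phi}{n}}$. I would use that the inclusion $\iota\from H\to\free(A)$ extends to a continuous homomorphism $\widehat{\iota}\from\widehat{H}\to\freepro(A)$ whose image is $\overline{H}$; by the classical theorem of M.~Hall that a finitely generated subgroup of a free group is a free factor of a subgroup of finite index, the profinite topology of $\free(A)$ induces on $H$ its full profinite topology, so $\widehat{\iota}$ is injective, hence an isomorphism $\widehat{H}\to\overline{H}$. Because $\widehat{\iota}\circ\widehat{\rest{\phi}{n}}$ and $(\psi|_{\overline{H}})\circ\widehat{\iota}$ are continuous and agree with $h\mapsto\phi(h)$ on the dense subgroup $H$ of $\widehat{H}$, this isomorphism conjugates $\widehat{\rest{\phi}{n}}$ to $\psi|_{\overline{H}}$, hence $\widehat{\rest{\phi}{n}}^{\,\omega}$ to $(\psi|_{\overline{H}})^\omega$; combined with the previous paragraph, $\img\bigl(\widehat{\rest{\phi}{n}}^{\,\omega}\bigr)\isom\img(e)\isom G$, as required. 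The step I expect to be the main obstacle is the injectivity of $\widehat{\iota}$: without it one only obtains a continuous surjection $\img\bigl(\widehat{\rest{\phi}{n}}^{\,\omega}\bigr)\twoheadrightarrow G$ rather than an isomorphism, and this is exactly where M.~Hall's theorem on finitely generated subgroups of free groups is indispensable; the remaining manipulations with idempotent powers and restrictions are routine once the commutation relation $\widehat{\iota}\circ\widehat{\rest{\phi}{n}}=\psi\circ\widehat{\iota}$ is in place.
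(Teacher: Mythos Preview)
Your proof is correct and follows essentially the same route as the paper's: reduce via Lemma~\ref{l:kovacs} to showing $\img(\widehat{\rest{\phi}{n}}^{\,\omega})\isom\img(\widehat{\phi}^{\,\omega})$, use the injectivity of the profinite extension of the inclusion $\img(\phi^n)\hookrightarrow\free(A)$ to intertwine $\widehat{\rest{\phi}{n}}$ with $\widehat{\phi}$, and conclude with the computation $\widehat{\phi}^\omega(\img(\widehat{\phi}^n))=\img(\widehat{\phi}^\omega)$. The only cosmetic difference is that the paper cites \cite[Theorem~4.6.7]{Almeida2020a} for the injectivity of $\widehat{\iota}$, whereas you invoke M.~Hall's theorem directly; these amount to the same thing, and your extra explicit identification $(\psi|_{\overline{H}})^\omega=e|_{\overline{H}}$ is packaged by the paper into the single line ``$\widehat{\eta}$ restricts to a continuous isomorphism $\img(\widehat{\rest{\phi}{n}}^\omega)\isom\widehat{\phi}^\omega(\img(\widehat{\eta}))$''.
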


\begin{proof}
    By the Nielsen--Schreier theorem, $\img(\phi^n)=\free(B)$ for some finite set $B$. Moreover, since $\free(B)$ is generated by $\phi^n(A)$, we have $\Card(B)\leq\Card(A)$. It remains to show that $\rest{\phi}{n}$ defines an $\omega$-presentation of $G$, which by Lemma~\ref{l:kovacs} amounts to showing that $\img(\widehat{\rest{\phi}{n}}^\omega) \isom \img(\widehat{\phi}^\omega)$. 

    Let $\eta\from\free(B)\to\free(A)$ be the homomorphism induced by the inclusion $\free(B)\subseteq\free(A)$ and let $\widehat{\eta}\from\freepro(B)\to\freepro(A)$ be its profinite extension. Since $\eta$ is injective, so is $\widehat{\eta}$ by \cite[Theorem~4.6.7]{Almeida2020a}. Moreover, from the equality $\phi\circ\eta=\eta\circ\rest{\phi}{n}$, we deduce that the following diagram is commutative:
    \begin{center}
        \begin{tikzcd}
            \freepro(B) \rar{\widehat{\eta}} \dar{\widehat{\rest{\phi}{n}}} & \freepro(A) \dar{\widehat{\phi}} \\
            \freepro(B) \rar{\widehat{\eta}} & \freepro(A).
        \end{tikzcd}
    \end{center}
    Hence, $\widehat{\eta}$ restricts to a continuous isomorphism $\img(\widehat{\rest{\phi}{n}}^\omega) \isom \widehat{\phi}^\omega(\img(\widehat{\eta}))$. Noting the equalities 
    \begin{equation*}
        \img(\widehat{\eta}) = \overline{\img(\eta)} = \overline{\img(\phi^n)} = \img(\widehat{\phi}^n),
    \end{equation*}
    it then suffices to show that $\widehat{\phi}^\omega(\img(\widehat{\phi}^n)) = \img(\widehat{\phi}^\omega)$. And indeed, we have
    \begin{equation*}
        \widehat{\phi}^\omega(\img(\widehat{\phi}^n)) \subseteq \img(\widehat{\phi}^\omega) = \widehat{\phi}^\omega(\img(\widehat{\phi}^\omega)) \subseteq \widehat{\phi}^\omega(\img(\widehat{\phi}^n)).\qedhere
    \end{equation*}
\end{proof}

Note that the restriction operation satisfies $\rest{\rest{\phi}{n}}{k} = \rest{\phi}{n+k}$. Therefore, $\{\rest{\phi}{n}\}_{n\in\nn}$ gives a sequence of $\omega$\hyp{}presentations of the same profinite group with weakly decreasing numbers of generators. The next result tells us exactly when the number of generators stabilizes. The proof mostly boils down to the well-known fact that free groups of finite rank enjoy the \emph{Hopfian property}, which can be stated as follows: every surjective homomorphism between two free groups of the same finite rank is an isomorphism. See for instance \cite[Theorem~41.52]{Neumann1967}.
\begin{proposition}
    Let $\phi$ define an $\omega$\hyp{}presentation of a profinite group $G$. For every two non-negative integers $m, n\in\nn$ with $n<m$, the $\omega$\hyp{}presentations of $G$ defined by $\rest{\phi}{n}$ and $\rest{\phi}{m}$ have the same number of generators if and only if $\rest{\phi}{n}$ is injective.
\end{proposition}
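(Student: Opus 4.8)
The strategy is to reduce the claim to the Hopfian property of finitely generated free groups stated above (cf.\ \cite[Theorem~41.52]{Neumann1967}). Following the proof of Proposition~\ref{p:restriction}, for each $k\in\nn$ I would fix a finite set $B_k$ with $\img(\phi^k)=\free(B_k)$; then the $\omega$\hyp{}presentation of $G$ defined by $\rest{\phi}{k}$ has exactly $\Card(B_k)$ generators, so the statement is really a statement about the integers $\Card(B_k)$. Since $\img(\phi^m)=\phi^{m-n}(\img(\phi^n))$ is a homomorphic image of $\free(B_n)$ whenever $n\leq m$, and a surjection of groups cannot increase the minimal number of generators, the sequence $\bigl(\Card(B_k)\bigr)_{k\in\nn}$ is weakly decreasing, as already observed before the statement.

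The first step is to reduce to the case $m=n+1$. On one hand, if $\Card(B_n)=\Card(B_m)$ for some $n<m$, then weak monotonicity forces $\Card(B_n)=\Card(B_{n+1})=\dots=\Card(B_m)$, and in particular $\Card(B_n)=\Card(B_{n+1})$. On the other hand, if $\rest{\phi}{n}$ is injective, then so is its restriction $\rest{\rest{\phi}{n}}{j}=\rest{\phi}{n+j}$ to the subgroup $\img(\phi^{n+j})$ of $\img(\phi^n)$, for every $j\geq0$; hence injectivity of $\rest{\phi}{n}$ propagates to $\rest{\phi}{n+1},\dots,\rest{\phi}{m-1}$. So it suffices to prove the equivalence: $\Card(B_n)=\Card(B_{n+1})$ if and only if $\rest{\phi}{n}$ is injective.

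For this last equivalence I would regard $r:=\rest{\phi}{n}$ as a homomorphism $\free(B_n)\to\free(B_{n+1})$, which is surjective because $\phi(\img(\phi^n))=\img(\phi^{n+1})$. If $\Card(B_n)=\Card(B_{n+1})$, then $r$ is a surjective homomorphism between free groups of the same finite rank, hence an isomorphism by the Hopfian property, and in particular $\rest{\phi}{n}$ is injective. Conversely, if $\rest{\phi}{n}$ is injective, then $r$ is a bijective homomorphism, so $\free(B_n)\isom\free(B_{n+1})$, and since the rank of a finitely generated free group is an invariant we conclude $\Card(B_n)=\Card(B_{n+1})$.

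I do not expect a genuine obstacle: once the Hopfian property is granted, the content is essentially bookkeeping. The two points that need care are (i) that the number of generators of the $\omega$\hyp{}presentation defined by $\rest{\phi}{k}$ equals $\Card(B_k)$, the rank of the free group $\img(\phi^k)$ — which is exactly how that presentation is produced in Proposition~\ref{p:restriction}, via the Nielsen--Schreier theorem — and (ii) the reduction from arbitrary $n<m$ to the adjacent case $m=n+1$, which uses both the weak monotonicity of the ranks and the identity $\rest{\rest{\phi}{n}}{k}=\rest{\phi}{n+k}$ to transport injectivity upward.
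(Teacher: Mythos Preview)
Your argument is correct and follows essentially the same approach as the paper: both proofs hinge on the Hopfian property applied to a surjection between free groups of equal finite rank. The only difference is organizational---the paper works directly with the composite $\rest{\phi}{n}^{m-n}\from\img(\phi^n)\to\img(\phi^m)$ and applies the Hopfian property once, whereas you first reduce to the adjacent case $m=n+1$; this extra reduction is unnecessary but harmless.
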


\begin{proof}
    We start by noting that $\rest{\phi}{n}^{m-n}$ is a continuous surjective homomorphism from $\img(\phi^n)$ to $\img(\phi^m)$. If $\rest{\phi}{n}$ and $\rest{\phi}{m}$ define $\omega$\hyp{}presentations with the same number of generators, then $\img(\phi^n)$ and $\img(\phi^m)$ are free groups of the same rank, and by the Hopfian property, $\rest{\phi}{n}^{m-n}$ is an isomorphism. In particular, $\rest{\phi}{n}^{m-n}$ is injective, and since $m-n\geq 1$ so is $\rest{\phi}{n}$. 

    Conversely, if $\rest{\phi}{n}$ is injective, then $\rest{\phi}{n}^{m-n}\from\img(\phi^n)\to\img(\phi^m)$ is an isomorphism. Thus, $\img(\phi^n)$ and $\img(\phi^m)$ are free groups of the same rank and the $\omega$\hyp{}presentations defined by $\rest{\phi}{m}$ and $\rest{\phi}{n}$ have the same number of generators.
\end{proof}

We immediately deduce the following.
\begin{corollary}\label{c:injective}
    Let $\phi$ define an $\omega$\hyp{}presentation of a profinite group $G$. If $\phi$ is not injective, then there exists an $\omega$\hyp{}presentation of $G$ with strictly less generators. In particular, if $\phi$ defines a minimal $\omega$\hyp{}presentation of $G$, then $\phi$ must be injective.
\end{corollary}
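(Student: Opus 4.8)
The plan is to read the corollary off from the preceding proposition, specialised to $n=0$ and $m=1$. First I would record the bookkeeping identity $\rest{\phi}{0}=\phi$: since $\img(\phi^0)=\free(A)$, restricting $\phi$ to $\img(\phi^0)$ returns $\phi$ itself, so $\rest{\phi}{0}$ defines the given $\omega$\hyp{}presentation of $G$ on $\Card(A)$ generators. By Proposition~\ref{p:restriction}, $\rest{\phi}{1}$ also defines an $\omega$\hyp{}presentation of $G$, whose number of generators is at most $\Card(A)$.

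Now suppose $\phi$ is not injective. Applying the preceding proposition with $n=0<1=m$, the $\omega$\hyp{}presentations of $G$ defined by $\rest{\phi}{0}=\phi$ and by $\rest{\phi}{1}$ have the same number of generators if and only if $\phi$ is injective; hence in the present situation they do not. Together with the bound furnished by Proposition~\ref{p:restriction} (equivalently, with the weak monotonicity of generator counts noted just after it), this forces the $\omega$\hyp{}presentation defined by $\rest{\phi}{1}$ to have strictly fewer than $\Card(A)$ generators, which is the first assertion.

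For the final clause I would argue by contraposition. If $\phi$ defines a minimal $\omega$\hyp{}presentation of $G$, then $\Card(A)=d(G)$, where $A$ is the alphabet on which $\phi$ is defined. Were $\phi$ not injective, the first part of the corollary would produce an $\omega$\hyp{}presentation — and in particular a presentation — of $G$ with strictly fewer than $d(G)$ generators, contradicting the defining property of $d(G)$ as the least number of generators among all presentations of $G$. Hence $\phi$ must be injective.

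Since the whole argument reduces to one application of the preceding proposition, I do not anticipate any real obstacle; the only points requiring a moment of care are the identification $\rest{\phi}{0}=\phi$ and checking that the proposition is being invoked with an admissible pair $n<m$ of non-negative integers.
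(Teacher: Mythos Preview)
Your proposal is correct and matches the paper's intended approach: the corollary is stated with no proof beyond ``We immediately deduce the following,'' and your specialisation of the preceding proposition to $n=0$, $m=1$ together with the identification $\rest{\phi}{0}=\phi$ is precisely the deduction being invoked.
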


The following example shows that injective endomorphisms can also define non-minimal $\omega$-presentations.

\begin{example}[Continued from Example~\ref{e:morse1}]\label{e:morse2}
    One can show that the endomorphism of $\free(A_4)$ induced by the return substitution $\retsubs{\tau}{0,1}$ is not injective. For instance,
    \begin{equation*}
        02^{-1}02^{-1}31^{-1}20^{-1} \in \ker(\retsubs{\tau}{0,1}).
    \end{equation*}
    Hence, by Corollary~\ref{c:injective}, the $\omega$\hyp{}presentation defined by $\retsubs{\tau}{0,1}$ is not minimal.

    On the other hand, $\retsubs{\tau}{0,10}$ extends to an injective endomorphism of $\free(A_4)$. One way to see this is to show that the set $\{03^{-1}, 31^{-1}, 3232, 2^{-1}12^{-1}3^{-1}\}$ is a basis of $\img(\retsubs{\tau}{0,10})$. More precisely, it is the basis determined (as in \cite[Lemma~6.1]{Kapovich2002}) by the spanning tree of the Stallings automaton of $\img(\retsubs{\tau}{0,10})$ given in Figure~\ref{f:stallings-morse}.
    \begin{figure}\centering
        \begin{tikzpicture}[scale=.6]
            \node (s0) at (23.799bp,64.799bp) [draw,circle, double] {$s_0$};
            \node (s1) at (108.4bp,109.8bp) [draw,circle] {$s_1$};
            \node (s2) at (108.4bp,19.799bp) [draw,circle] {$s_2$};
            \node (s3) at (192.99bp,64.799bp) [draw,circle] {$s_3$};
            \draw [->] (s0) to[bend left] node {$0$} (s1);
            \draw [->,dashed] (s0) to node {$3$} (s1);
            \draw [<-] (s0) to[bend right] node {$1$} (s1);
            \draw [<-,dashed] (s0) to node {$2$} (s2);
            \draw [->,dashed] (s1) to node {$2$} (s3);
            \draw [->] (s2) to[bend left] node {$1$} (s3);
            \draw [<-] (s2) to[bend right] node {$3$} (s3);
        \end{tikzpicture}
        \caption{Stallings automaton of the image of the return substitution $\retsubs{\tau}{0,10}$ viewed as an endomorphism of $\free(A_4)$. The distinguished state is identified by a double circle and the dashed edges form a spanning tree.}
        \label{f:stallings-morse}
    \end{figure}
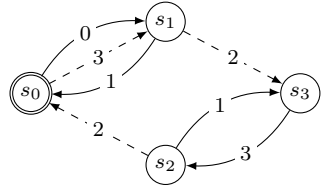
    Even though $\retsubs{\tau}{0,10}$ is injective, it does not define a minimal $\omega$\hyp{}presentation of $G(\tau)$, since it has the same number of generators as the non-minimal $\omega$\hyp{}presentation defined by $\retsubs{\tau}{0,1}$.

    According to Proposition~\ref{p:restriction}, a shorter $\omega$\hyp{}presentation of $G(\tau)$ is defined by the restriction $\rest{\retsubs{\tau}{0,1}}{1}$. Here is the endomorphism $\rest{\retsubs{\tau}{0,1}}{1}$ expressed in the basis $\{ 3^{-1}2,\ 20^{-1},\ 2^{-1}302^{-1}1 \}$ of $\img(\retsubs{\tau}{0,1})$: 
    \begin{equation*}
        \begin{array}{rlll}
            \rest{\retsubs{\tau}{0,1}}{1}\from & 0 & \mapsto & 02110\\
                               & 1 & \mapsto & 10021\\
                               & 2 & \mapsto & 2.
        \end{array}
    \end{equation*}

    Another $\omega$\hyp{}presentation of $G(\tau)$ with 3 generators was obtained, by other means, in \cite{Almeida2013}, where it is also shown that $d(G(\tau))=3$ \cite[Theorem~7.7]{Almeida2013}. Therefore, the $\omega$\hyp{}presentation defined by $\rest{\retsubs{\tau}{0,1}}{1}$ is minimal.
\end{example}

\section{Freeness via \texorpdfstring{$\omega$}{Omega}-presentations}
\label{s:freeness}

In this section, we present a few key results concerning freeness of profinite groups with $\omega$\hyp{}presentations. Given an endomorphism $\phi$ of $\free(A)$, the \emph{incidence matrix} of $\phi$ is the matrix $M(\phi)\in\zz^{A\times A}$ defined by $M(\phi)_{a,b} = |\phi(a)|_b$, where $|-|_b\from\free(A)\to\zz$ is the unique group homomorphism extending the Kronecker delta function $\delta_b\from A\to\zz$. The main result of the section is the following theorem, which provides a simple freeness test for Schützenberger groups of primitive substitutions. We will make use of this test in Section~\ref{s:invnonfree} to exhibit a primitive invertible substitution whose Schützenberger group is not free. Two examples where this test can be applied are also presented at the end of the current section.
\begin{theorem}\label{t:det}
    Let $G$ be a profinite group with an $\omega$\hyp{}presentation defined by an endomorphism $\phi$ such that $\det(M(\phi))\neq 0$. Then $G$ is a free profinite group if and only if $\phi$ is an automorphism.
\end{theorem}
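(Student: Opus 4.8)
The plan is to prove both implications separately, with the forward direction being essentially trivial and the reverse direction carrying all the weight.

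For the easy direction, suppose $\phi$ is an automorphism of $\free(A)$. Then $\widehat{\phi}$ is a continuous automorphism of $\freepro(A)$, so $\widehat{\phi}^\omega = \id$ (the unique idempotent in the closure of the cyclic semigroup generated by an invertible element of a profinite monoid is the identity). Hence $\img(\widehat{\phi}^\omega) = \freepro(A)$, and by Lemma~\ref{l:kovacs} we get $G \isom \freepro(A)$, which is free profinite.

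For the hard direction, assume $G$ is free profinite; I want to conclude that $\phi$ is an automorphism. By Lemma~\ref{l:kovacs}, $G \isom \img(\widehat{\phi}^\omega)$, and $\widehat{\phi}^\omega$ is an idempotent continuous endomorphism of $\freepro(A)$, so $H := \img(\widehat{\phi}^\omega)$ is a retract of $\freepro(A)$: we have $\freepro(A) = H \ltimes \ker(\widehat{\phi}^\omega)$ in the topological sense, and in particular $H$ is a free profinite group (being a retract of a free profinite group — this uses that retracts of free profinite groups are free, a standard fact) which is also a (topological) quotient of $\freepro(A)$. The first structural input is a rank count. Because $\det(M(\phi)) \neq 0$, the abelianization map behaves well: abelianizing, $\widehat{\phi}$ induces the endomorphism of $\freepro(A)^{\mathrm{ab}} \isom \widehat{\zz}^A$ given by the matrix $M(\phi)$ (up to transpose), and since $\det M(\phi) \neq 0$ this is injective on $\zz^A$ and hence on the profinite completion. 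Taking $\omega$-powers, $\widehat{\phi}^\omega$ induces on $\widehat{\zz}^A$ an idempotent with the same image as all powers $M(\phi)^n$; injectivity of $M(\phi)$ forces this idempotent to be the identity, so $H^{\mathrm{ab}} = \freepro(A)^{\mathrm{ab}}$. Therefore $H$ is a free profinite group whose abelianization has the same rank as that of $\freepro(A)$, namely $\Card(A)$, so $H$ is free of rank $\Card(A)$.

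Now comes the main obstacle: from "$H$ is free of rank $\Card(A)$ and $H$ is a retract of $\freepro(A)$ via the idempotent $\widehat{\phi}^\omega$" I need to deduce that $\widehat{\phi}^\omega = \id$, hence $\ker \widehat{\phi}^\omega = 1$, hence $\widehat{\phi}$ is injective (as $\ker\widehat{\phi} \subseteq \ker\widehat{\phi}^\omega$), and finally that $\phi$ is injective and surjective on $\free(A)$, i.e. an automorphism. The surjectivity of $\widehat{\phi}^\omega$ onto $\freepro(A)$ is what I need. This should follow from a Hopfian-type argument in the profinite setting: $\widehat{\phi}^\omega \from \freepro(A) \to H$ is a continuous surjection between free profinite groups of the same finite rank $\Card(A)$, and finitely generated free profinite groups are Hopfian (indeed every finitely generated profinite group is Hopfian), so $\widehat{\phi}^\omega$ is an isomorphism; being also idempotent, it must be the identity. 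Then $\widehat{\phi}$ is injective, which restricts to injectivity of $\phi$ on $\free(A)$; and $\det M(\phi) \neq 0$ together with injectivity forces the Nielsen–Schreier rank of $\img(\phi)$ to equal $\Card(A)$, so by the Hopfian property of $\free(A)$ the injective endomorphism $\phi$ is surjective, hence an automorphism. The delicate points to get right are: (i) justifying that a retract of a free profinite group is free and computing its rank via abelianization, which is where $\det(M(\phi)) \neq 0$ is essential; and (ii) the clean application of profinite Hopficity to upgrade "surjection of equal rank" to "isomorphism" — I expect (ii) to be the cleanest once (i) is in place, so the real work is the rank bookkeeping in (i).
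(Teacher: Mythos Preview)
Your overall architecture matches the paper's: establish that the $\omega$-presentation is minimal (i.e.\ $d(G)=\Card(A)$), then apply profinite Hopficity to the surjection $\widehat{\phi}^\omega\colon\freepro(A)\to H$. The easy direction is fine. The gap is in your rank computation via abelianization. You claim that because $M(\phi)$ acts injectively on $\widehat{\zz}^A$, its $\omega$-power must be the identity. This is false: injectivity of an element of a profinite monoid does not force its $\omega$-power to be trivial --- that requires invertibility. Concretely, take $\Card(A)=1$ and $M(\phi)=(2)$. Multiplication by $2$ is injective on $\widehat{\zz}$ (each $\zz_p$ is a domain), yet $2^\omega$ is the idempotent whose $\zz_2$-component is $0$ and whose $\zz_p$-component is $1$ for odd $p$; hence $\img(2^\omega)=\prod_{p\neq 2}\zz_p\neq\widehat{\zz}$. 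So $H^{\mathrm{ab}}$ need not equal $\widehat{\zz}^A$, and your rank argument collapses. The repair is exactly what the paper does in Proposition~\ref{p:minimality}: pick a prime $p\nmid\det(M(\phi))$, so that $M(\phi)$ is invertible over $\zz/p\zz$; then $(\zz/p\zz)^A$ is a continuous quotient of $G$ and $d(G)\geq\Card(A)$ follows. (You could also salvage your abelianization idea by localizing at such a prime and comparing $\zz_p$-ranks, using that $G$ is assumed free; this is morally the same computation.)

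There is a second, minor confusion in your final step. Once $\widehat{\phi}^\omega=\id$, you get that $\widehat{\phi}$ is a unit in the profinite monoid $\End(\freepro(A))$, i.e.\ an automorphism --- not merely injective. You then conclude that $\phi$ is an automorphism of $\free(A)$ directly (this is \cite[Proposition~4.6.8]{Almeida2020a}). Your attempted bootstrap via ``injectivity plus Hopficity of $\free(A)$'' is backwards: the Hopfian property says surjective endomorphisms are injective, not the converse; an injective endomorphism of $\free(A)$ need not be surjective (think $a\mapsto a^2$ on $\free(\{a\})$).
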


The following example shows why the theorem may fail without the assumption that $\det(M(\phi))\neq 0$.
\begin{example}
    Let $A$ be an alphabet and $b$ a letter not in $A$. Consider the endomorphism $\phi$ of $\free(A\cup\{b\})$ defined by
    \begin{equation*}
        \phi(a) = 
        \begin{cases}
            a & \text{ if } a\neq b \\
            \emptyw & \text{ if } a=b.
        \end{cases}
    \end{equation*}
    It is straightforward to check that $\phi$ defines an $\omega$\hyp{}presentation of $\freepro(A)$, but it is clearly not an automorphism. 
\end{example}

We split the proof of Theorem~\ref{t:det} into two propositions. The first one relates freeness with minimal $\omega$\hyp{}presentations. The proof uses the fact that free profinite groups of finite ranks satisfy a topological version of the Hopfian property: every continuous surjective homomorphism between two free profinite groups of the same rank is an isomorphism \cite[Proposition~2.5.2]{Ribes2010a}. Also note the following straightforward consequence of the Hopfian property, which is used in the proof: the free profinite group over a finite set $A$ cannot be generated by strictly less than $\Card(A)$ elements, and therefore $d(\freepro(A)) = \Card(A)$.
\begin{proposition}\label{p:free}
    Let $\phi\in\End(\free(A))$ define a minimal $\omega$\hyp{}presentation of a profinite group $G$. Then $G$ is a free profinite group if and only if $\phi$ is an automorphism.
\end{proposition}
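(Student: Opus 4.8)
The plan is to pass to the profinite extension $\psi := \widehat{\phi}$ and invoke Lemma~\ref{l:kovacs}, which identifies $G$ with $\img(\psi^\omega)$; note also that minimality of the $\omega$\hyp{}presentation means $\Card(A) = d(G)$. For the easy ``if'' direction I would argue as follows. If $\phi$ is an automorphism of $\free(A)$, then $\psi$ is a continuous automorphism of $\freepro(A)$, its inverse being $\widehat{\phi^{-1}}$, hence a unit of the profinite monoid $\End(\freepro(A))$. Projecting onto any finite quotient monoid, the image of $\psi$ lies in the (finite) group of units there, so the unique idempotent among its powers is the identity; passing to the inverse limit yields $\psi^\omega = \id$. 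Therefore $G \isom \img(\psi^\omega) = \freepro(A)$, which is free.

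For the converse, assume $G$ is free. Since $G$ is free and $d(G) = \Card(A) = d(\freepro(A))$, it is the free profinite group of rank $\Card(A)$, so $\img(\psi^\omega) \isom \freepro(A)$. Now $\psi^\omega$ is a continuous surjection of $\freepro(A)$ onto $\img(\psi^\omega)$ between two free profinite groups of the same finite rank, so by the topological Hopfian property it is an isomorphism. In particular $\ker(\psi^\omega)$ is trivial; since $\ker(\psi) \subseteq \ker(\psi^\omega)$ ($\psi^\omega$ being a limit of positive powers of $\psi$), the endomorphism $\psi$ is injective, and since $\img(\psi) \supseteq \img(\psi^\omega) = \freepro(A)$ it is also surjective. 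A continuous bijective endomorphism of a compact group is a topological automorphism, so $\psi = \widehat{\phi}$ is an automorphism of $\freepro(A)$.

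It then remains to descend from $\widehat{\phi}$ to $\phi$. Injectivity of $\phi$ is immediate, as $\phi$ is the restriction of $\psi$ to $\free(A)$. For surjectivity I would use that $\img(\phi)$ is a finitely generated subgroup of $\free(A)$, hence closed in the profinite topology of $\free(A)$ by M.~Hall's theorem (free groups are subgroup separable), while at the same time $\overline{\img(\phi)} = \img(\psi) = \freepro(A)$ shows $\img(\phi)$ is dense in $\freepro(A)$; a subgroup that is simultaneously closed and dense must be everything, so $\phi$ is onto and hence an automorphism. I expect this descent to be the main obstacle: surjectivity of $\widehat{\phi}$ does not by itself imply surjectivity of $\phi$ (an injective non-surjective endomorphism of $\free(A)$ could a priori be ruled out only this way), and bridging that gap is exactly where subgroup separability of free groups has to enter. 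Everything else reduces to the Hopfian property and the identification $G \isom \img(\widehat{\phi}^\omega)$ supplied by Lemma~\ref{l:kovacs}.
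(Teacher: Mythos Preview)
Your proof is correct and follows essentially the same strategy as the paper: identify $G \isom \img(\widehat{\phi}^\omega)$ via Lemma~\ref{l:kovacs}, apply the topological Hopfian property to make $\widehat{\phi}^\omega$ injective, conclude that $\widehat{\phi}$ is an automorphism, and then descend to $\phi$. The only differences are cosmetic: the paper observes that an injective idempotent is the identity and then cites \cite[Proposition~3.7.4]{Almeida2020a} and \cite[Proposition~4.6.8]{Almeida2020a} for the last two steps, whereas you unpack those citations by hand---your descent argument via M.~Hall's subgroup separability theorem is precisely what lies behind \cite[Proposition~4.6.8]{Almeida2020a}.
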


\begin{proof}
    Suppose that $\phi$ is an automorphism. Noting that the profinite completion is functorial \cite[Lemma~3.2.3]{Ribes2010a}, it follows that $\widehat{\phi}$ is also an automorphism, and by \cite[Proposition~3.7.4]{Almeida2020a}, $\widehat{\phi}^\omega$ is the identity. Since $\phi$ defines an $\omega$\hyp{}presentation of $G$, we see that
    \begin{equation*}
        G\isom \langle A \mid \widehat{\phi}^\omega(a)a^{-1} \given a\in A\rangle = \langle A \mid aa^{-1} \given a \in A \rangle = \langle A \mid \emptyw \rangle = \freepro(A).
    \end{equation*}

    Conversely, suppose that $G$ is a free profinite group. Since the $\omega$\hyp{}presentation of $G$ defined by $\phi$ is minimal, we have $d(G) = \Card(A)$ and it follows that $G$ is isomorphic to $\freepro(A)$. Moreover, by Lemma~\ref{l:kovacs}, $G$ is isomorphic to $\img(\widehat{\phi}^\omega)$. Therefore, $\widehat{\phi}^\omega\from\freepro(A)\to\img(\widehat{\phi}^\omega)$ is a continuous surjective homomorphism between free profinite groups of the same rank. By the Hopfian property, it follows that $\widehat{\phi}^\omega$ is injective. Since it is idempotent, we conclude that $\widehat{\phi}^\omega$ is the identity. By \cite[Proposition~3.7.4]{Almeida2020a}, $\widehat{\phi}$ is an automorphism and by \cite[Proposition~4.6.8]{Almeida2020a}, so is $\phi$.
\end{proof}

\begin{remark}
    At time of writing, we are not aware of any reliable way to find minimal $\omega$\hyp{}presentations for Schützenberger groups of primitive substitutions. Example~\ref{e:morse2} gives some clues as to why this might be a difficult problem.
\end{remark}

The second proposition, which completes the proof of Theorem~\ref{t:det}, gives a sufficient condition for an $\omega$\hyp{}presentation to be minimal. The minimal $\omega$\hyp{}presentation of $G(\tau)$ given at the end of Example~\ref{e:morse2} shows that this condition is not necessary.
\begin{proposition}\label{p:minimality}
    Let $\phi\in\End(\free(A))$ define an $\omega$\hyp{}presentation of a profinite group $G$ such that $\det(M(\phi))\neq 0$. Then the $\omega$\hyp{}presentation defined by $\phi$ is a minimal presentation of $G$.
\end{proposition}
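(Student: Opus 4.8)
The plan is to compute $d(G)$ and show it equals $\Card(A)$. Since $d(G)$ is by definition the least number of generators over all presentations of $G$, and the given $\omega$\hyp{}presentation has exactly $\Card(A)$ generators, this is precisely the assertion that it is minimal. The bound $d(G)\leq\Card(A)$ is automatic, so everything comes down to proving $d(G)\geq\Card(A)$, and the route I would take is to exhibit a continuous surjection from $G$ onto a finite group that cannot be generated by fewer than $\Card(A)$ elements.

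For the construction, use $\det(M(\phi))\neq 0$ to pick a prime $p$ with $p\nmid\det(M(\phi))$, set $V=(\zz/p\zz)^A$, and let $\rho\from\freepro(A)\to V$ be the canonical projection (abelianize, then reduce modulo $p$). By hypothesis $G\isom\freepro(A)/N$, where $N$ is the closed normal subgroup generated by the relators $\widehat{\phi}^\omega(a)a^{-1}$ ($a\in A$). Since $V$ is finite abelian and $N$ is normally generated by those relators, $\rho$ factors through $G$ as soon as $\rho(\widehat{\phi}^\omega(a)a^{-1})=\emptyw$ for all $a$, i.e.\ as soon as $\widehat{\phi}^\omega$ induces the identity on $V$. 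Granting this, $\rho$ yields a continuous surjection $G\twoheadrightarrow V$, and since $V$ is an $\mathbb{F}_p$\hyp{}vector space of dimension $\Card(A)$ we get $d(G)\geq d(V)=\Card(A)$, which finishes the argument.

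It remains to see that $\widehat{\phi}^\omega$ acts trivially on $V$. The key observation is that $\ker\rho$ — the closed subgroup of $\freepro(A)$ generated by commutators and $p$\hyp{}th powers — is fully invariant, so every $\psi\in\End(\freepro(A))$ induces an endomorphism $\overline{\psi}$ of $V$, and $\psi\mapsto\overline{\psi}$ is a monoid homomorphism $h$ from the profinite monoid $\End(\freepro(A))$ into the finite monoid $\End(V)$. This $h$ is continuous, because the preimage of each $\beta\in\End(V)$ is the intersection over $a\in A$ of the clopen sets $\{\psi\given\rho(\psi(a))=\beta(\rho(a))\}$ (evaluation $\psi\mapsto\psi(a)$ being continuous for the compact\hyp{}open topology). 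A continuous homomorphism from a profinite monoid to a finite one preserves $\omega$\hyp{}powers, so $h(\widehat{\phi}^\omega)=h(\widehat{\phi})^\omega$. But $h(\widehat{\phi})$ is the endomorphism of $V$ with matrix $M(\phi)$ reduced modulo $p$ (up to transpose, depending on conventions), which is invertible by the choice of $p$; thus $h(\widehat{\phi})$ lies in the group $\mathrm{GL}(V)\subseteq\End(V)$, and the $\omega$\hyp{}power of a unit of a finite monoid is the identity. Hence $h(\widehat{\phi}^\omega)=\id_V$, as required.

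The only step with any real content is this last one — obtaining a usable grip on $\widehat{\phi}^\omega$ — and the argument reduces it to the elementary fact that an invertible matrix over a finite field has finite multiplicative order. If one prefers to bypass the abstract remarks about $h$, the same conclusion follows directly from $\widehat{\phi}^\omega=\lim_n\widehat{\phi}^{n!}$ together with the observation that $\overline{\widehat{\phi}}^{\,n!}=\id_V$ as soon as $n!$ is a multiple of the order of $\overline{\widehat{\phi}}$ in $\mathrm{GL}(V)$. Everything else — the structure of $\ker\rho$, the continuity of evaluation, and the fact that continuous quotients do not increase $d(\cdot)$ — is routine bookkeeping.
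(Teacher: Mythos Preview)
Your proof is correct and follows essentially the same route as the paper: pick a prime $p\nmid\det(M(\phi))$, take $V=(\zz/p\zz)^A$, and use that $M(\phi)$ reduces to an invertible matrix over $\zz/p\zz$ (hence of finite multiplicative order) to conclude that $V$ is a continuous quotient of $G$, forcing $d(G)\geq\Card(A)$. The only difference is packaging: the paper invokes Proposition~\ref{p:tuple} (the tuple criterion from \cite{Almeida2013}) to certify that $V$ is a quotient, whereas you argue directly that $\widehat{\phi}^\omega$ descends to the identity on $V$ so that the relators die under $\rho$ --- but this is the same computation viewed from two angles.
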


The proof relies on a result from \cite{Almeida2013} which is recalled in the next proposition. Given $\psi\in\End(\freepro(A))$ and a finite group $H$, we define an operator $\psi_H\from H^A\to H^A$ as follows. A tuple $t\in H^A$, viewed as a map $A\to H$, extends uniquely to a continuous homomorphism $\widehat{t}\from\freepro(A)\to H$. We define $\psi_H(t)\in H^A$ by
\begin{equation*}
    \psi_H(t)(a) = \widehat{t}(\psi(a)), \qquad a\in A.
\end{equation*}
This construction gives a contravariant continuous action of the profinite monoid $\End(\freepro(A))$ on $H^A$ \cite[Lemma~3.1]{Almeida2013}.

Let $H$ be a finite group and $t\in H^A$ be a tuple. We say that $t$ \emph{generates $H$} if its components form a generating set of $H$.
\begin{proposition}[{\cite[Proposition~3.2]{Almeida2013}}]\label{p:tuple}
    Let $\phi\in\End(\free(A))$ define an $\omega$\hyp{}presentation of a profinite group $G$ and $H$ be a finite group. Then the following are equivalent:
    \begin{enumerate}
        \item $H$ is a continuous homomorphic image of $G$.
        \item There exist $t\in H^A$ and $n\geq 1$ such that $t$ generates $H$ and $\widehat{\phi}^n_H(t)=t$.
    \end{enumerate}
\end{proposition}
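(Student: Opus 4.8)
The plan is to reinterpret the operator $\widehat{\phi}_H$ as precomposition by $\widehat{\phi}$ and then exploit the finiteness of $H^A$. First I would set up the dictionary between tuples and homomorphisms: a tuple $t\in H^A$ corresponds to the continuous homomorphism $\widehat{t}\from\freepro(A)\to H$, and $t$ generates $H$ exactly when $\widehat{t}$ is surjective. Directly from the defining relation $\psi_H(t)(a)=\widehat{t}(\psi(a))$ one checks that $\widehat{\psi_H(t)}=\widehat{t}\circ\psi$, since both sides are continuous homomorphisms that agree on $A$. Iterating gives that $\widehat{\phi}^n_H(t)$ corresponds to $\widehat{t}\circ\widehat{\phi}^n$. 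More importantly, the continuous map $\psi\mapsto\psi_H$ into the finite transformation monoid of $H^A$ preserves $\omega$\hyp{}powers of a single element (anti-homomorphisms behave like homomorphisms on powers of one element, and continuity passes to the limit defining the $\omega$\hyp{}power). Writing $e=\widehat{\phi}^\omega$ and $K=\img(e)$, this yields the key identity $(\widehat{\phi}_H)^\omega=(\widehat{\phi}^\omega)_H=e_H$, where $e_H(t)$ is the tuple corresponding to $\widehat{t}\circ e$. By Lemma~\ref{l:kovacs} we have $G\isom K$, so condition (1) is equivalent to the existence of a continuous surjection $g\from K\to H$.

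The second step is a finite-set observation that converts the quantifier ``$\exists n\geq 1$'' appearing in (2) into membership in $\img(e_H)$. Since $H^A$ is finite, the transformation $f=\widehat{\phi}_H$ has an idempotent power $f^\omega=e_H$ whose image is exactly the set of periodic points of $f$, that is, the $t$ with $f^n(t)=t$ for some $n\geq 1$ (the points lying on a cycle of $f$); and because $e_H$ is idempotent, this image coincides with its fixed-point set. Translating back through the dictionary, the condition ``$\exists n\geq 1\colon\widehat{\phi}^n_H(t)=t$'' is equivalent to $e_H(t)=t$, i.e. to $\widehat{t}\circ e=\widehat{t}$. Thus (2) is precisely the assertion that there is a surjective continuous homomorphism $\widehat{t}\from\freepro(A)\to H$ satisfying $\widehat{t}\circ e=\widehat{t}$.

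It then remains to match these factorizing homomorphisms with surjections out of $K$. For the implication (2)$\Rightarrow$(1), given such a $\widehat{t}$ I would set $g=\widehat{t}|_K$; since $e$ is an idempotent with image $K$, the relation $\widehat{t}\circ e=\widehat{t}$ gives $\widehat{t}=g\circ e$, whence $\img\widehat{t}=g(K)$ and surjectivity of $\widehat{t}$ is equivalent to surjectivity of $g$, so $H$ is a continuous homomorphic image of $K\isom G$. Conversely, for (1)$\Rightarrow$(2), starting from a continuous surjection $g\from K\to H$, the homomorphism $\widehat{t}:=g\circ e$ is surjective and satisfies $\widehat{t}\circ e=\widehat{t}$ by idempotency of $e$; the corresponding tuple $t$ therefore generates $H$ and lies in $\img(e_H)$, so $\widehat{\phi}^n_H(t)=t$ for a suitable $n\geq 1$, which is (2).

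I expect the main obstacle to be the identity $(\widehat{\phi}_H)^\omega=(\widehat{\phi}^\omega)_H$, which is where the continuity of the action from \cite[Lemma~3.1]{Almeida2013} is essential: one must know that a continuous (anti-)homomorphism of profinite monoids sends the $\omega$\hyp{}power of $\widehat{\phi}$ to the $\omega$\hyp{}power of its image $\widehat{\phi}_H$, and that in the finite transformation monoid on $H^A$ the idempotent power has image equal to the set of periodic points. Once these two facts are in place, everything else is routine bookkeeping with the correspondence $t\leftrightarrow\widehat{t}$ and the factorization through the idempotent $e$.
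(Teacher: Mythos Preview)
The paper does not actually prove this proposition: it is quoted verbatim from \cite[Proposition~3.2]{Almeida2013} and invoked as a black box in the proof of Proposition~\ref{p:minimality}. There is therefore no proof in the present paper to compare your argument against.

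Your argument is nonetheless correct and is essentially the natural one. The dictionary $t\leftrightarrow\widehat{t}$ and the identity $\widehat{\psi_H(t)}=\widehat{t}\circ\psi$ are standard; the passage $(\widehat{\phi}_H)^\omega=(\widehat{\phi}^\omega)_H$ follows exactly as you say from continuity of the contravariant action recorded in \cite[Lemma~3.1]{Almeida2013}, together with the observation that an anti-homomorphism agrees with a homomorphism on powers of a single element. The finite-monoid fact that the fixed-point set of the idempotent power of a self-map of a finite set coincides with its set of periodic points is elementary, and your use of Lemma~\ref{l:kovacs} to identify $G$ with $\img(\widehat{\phi}^\omega)$ closes the loop cleanly. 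Nothing is missing.
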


With this, we are ready for the proof of Proposition~\ref{p:minimality}, which also completes the proof of Theorem~\ref{t:det}.
\begin{proof}[Proof of Proposition~\ref{p:minimality}]
    Since every continuous homomorphic image $H$ of $G$ satisfies $d(H)\leq d(G)$, it suffices to show that one such image exists satisfying $d(H) = \Card(A)$. To this end, fix a prime $p$ that does not divide $\det(M(\phi))$ and let $H = (\zz/p\zz)^A$. Clearly, $d(H) = \Card(A)$. Let $M_p(\phi)$ be the reduction modulo $p$ of the incidence matrix $M(\phi)$. Then, a direct computation shows that for all $t\in H^A$ and $a\in A$,
    \begin{equation*}
        \widehat{\phi}_H(t)(a) = \sum_{b\in A}\epsilon_b(\phi(a))t(b) = (M_p(\phi)t)(a),
    \end{equation*}
    where $t$ is viewed as a column vector in the rightmost expression. By our choice of $p$, the determinant of $M_p(\phi)$ is an invertible element of $\zz/p\zz$, hence $M_p(\phi)$ is an invertible matrix over $\zz/p\zz$. Since invertible matrices of order $\Card(A)$ over $\zz/p\zz$ form a finite group, $M_p(\phi)^n$ is an identity matrix for some $n\geq 1$. It follows that 
    \begin{equation*}
        \widehat{\phi}^n_H(t) = M_p(\phi)^nt = t.
    \end{equation*}
    Hence, we may apply Proposition~\ref{p:tuple} with any tuple that generates $H$ (for instance, a tuple formed by a basis of $H$ as a vector space over $\zz/p\zz$), and we conclude that $H$ is a continuous homomorphic image of $G$.
\end{proof}

We finish this section by pointing out some interesting applications of Theorem~\ref{t:det}, starting with the following corollary:
\begin{corollary}\label{c:det}
    Let $G$ be a profinite group with an $\omega$\hyp{}presentation defined by an endomorphism $\phi$ such that $|\det(M(\phi))|>1$. Then $G$ is not a free profinite group.
\end{corollary}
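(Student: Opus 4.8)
The plan is to derive this immediately from Theorem~\ref{t:det}. The hypothesis $|\det(M(\phi))| > 1$ certainly implies $\det(M(\phi)) \neq 0$, so Theorem~\ref{t:det} applies and tells us that $G$ is a free profinite group if and only if $\phi$ is an automorphism of $\free(A)$. Thus it suffices to show that $\phi$ cannot be an automorphism under the stated determinant condition.

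To do this, I would pass to incidence matrices. The assignment $\phi \mapsto M(\phi)$ is compatible with composition: since each $|{-}|_b \from \free(A) \to \zz$ is a group homomorphism, a direct computation gives $M(\id) = I$ and $M(\psi\circ\phi) = M(\phi)\,M(\psi)$ (equivalently, $M(\phi)$ is the matrix of the endomorphism induced by $\phi$ on the abelianization $\free(A)^{\mathrm{ab}} \isom \zz^{A}$). Consequently, if $\phi$ is an automorphism of $\free(A)$, then $M(\phi)$ is an invertible element of the ring of $\Card(A)\times\Card(A)$ integer matrices, hence $\det(M(\phi)) \in \{1,-1\}$. This contradicts $|\det(M(\phi))| > 1$, so $\phi$ is not an automorphism, and Theorem~\ref{t:det} yields that $G$ is not free.

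I do not expect any genuine obstacle here; the proof is a short deduction. The only points requiring minor care are the bookkeeping of the (contravariant) functoriality of $M({-})$ — in particular getting the order of matrix multiplication right, which is irrelevant for the determinant anyway — and invoking the standard fact that the units of the matrix ring $\zz^{A\times A}$ are exactly the matrices with determinant $\pm 1$. Both are routine, so I would state them briefly and conclude.
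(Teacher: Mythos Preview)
Your proposal is correct and follows essentially the same approach as the paper: both reduce to Theorem~\ref{t:det}, then use the (anti-)functoriality of $M(-)$ to conclude that an automorphism $\phi$ would force $M(\phi)$ to be invertible over $\zz$ and hence have determinant $\pm 1$, contradicting the hypothesis. The only cosmetic difference is that the paper phrases it as a proof by contradiction starting from ``$G$ is free'', while you argue directly that $\phi$ cannot be an automorphism.
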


\begin{proof}
    Suppose that $G$ is a free profinite group. By Theorem~\ref{t:det}, $\phi$ is an automorphism of $\free(A)$. But note that the incidence matrix defines a monoid homomorphism from $\End(\free(A))$ equipped with reversed composition, to the monoid of matrices of order $\Card(A)$ over $\zz$. In particular, it follows that the matrix $M(\phi)$ is invertible over $\zz$. Therefore, $|\det(M(\phi))|$ equals~1, a contradiction.
\end{proof}

Next, we present  two examples of primitive substitutions where the previous corollary may be used to show that the Schützenberger group is not free. The first example is due to Almeida, who proved that the Schützenberger group is non-free back in 2005 \cite[Example~7.2]{Almeida2007}.
\begin{example}[Almeida's example]\label{e:almeida}
    Let $\alpha$ be the primitive binary substitution defined by
    \begin{equation*}
        \begin{array}{rlll}
                \alpha\from & 0 & \mapsto & 01\\
                            & 1 & \mapsto & 0001.
        \end{array}
    \end{equation*}
    This substitution is aperiodic (using for instance \cite[Exercise~5.15]{Almeida2020a}) and proper. It follows that $\alpha$ defines an $\omega$\hyp{}presentation of $G(\alpha)$ (see Remark~\ref{r:proper}). A quick computation shows that $\det(M(\alpha))=-2$, hence $G(\alpha)$ is not a free profinite group by Corollary~\ref{c:det}. 
\end{example}

The second example is another well-known primitive substitution, although it appears as though its Schützenberger group has not been studied. 
\begin{example}
    The \emph{period doubling substitution} is the binary substitution $\varrho$ defined as follows:
    \begin{equation*}
        \begin{array}{rlll}
            \varrho\from & 0 & \mapsto & 01\\
                         & 1 & \mapsto & 00.
        \end{array}
    \end{equation*}
    It is a primitive substitution which is also aperiodic (using again \cite[Exercise~5.15]{Almeida2020a}). It admits $(1,0)$ as a connection of order 2. The return substitution $\retsubs{\varrho}{1,0}$ is given by
    \begin{equation*}
        \begin{array}{rlll}
            \retsubs{\varrho}{1,0}\from & 0 & \mapsto & 010\\
                                        & 1 & \mapsto & 01110.
        \end{array}
    \end{equation*}    
    The incidence matrix of $\retsubs{\varrho}{1,0}$ has determinant 4. By Theorem~\ref{t:return}, $\retsubs{\varrho}{1,0}$ defines an $\omega$-presentation of $G(\varrho)$, hence we may apply Corollary~\ref{c:det} to conclude that $G(\varrho)$ is not free.
\end{example}

\section{Schützenberger groups of relatively invertible substitutions}
\label{s:relfree}

In this section, we examine the Schützenberger groups of relatively invertible primitive substitutions, that is primitive substitutions that extend to automorphisms of some relatively free profinite group. To this end, it is useful to first recall a few basic things about pseudovarieties. A \emph{pseudovariety of groups}, or \emph{pseudovariety} for short, is a class $\var{H}$ of finite groups closed under taking subgroups, quotients and finite direct products. Here are a few common examples: 
\begin{itemize}
    \item the pseudovariety $\var{G}$ of all finite groups;
    \item the pseudovariety $\var{G}_p$ of finite $p$-groups, where $p$ is a given prime;
    \item the pseudovariety $\var{G}_{\nil}$ of finite nilpotent groups;
    \item the pseudovariety $\var{G}_{\sol}$ of finite solvable groups;
    \item the pseudovariety $\var{Ab}$ of finite Abelian groups.
\end{itemize}
A pseudovariety $\var{H}$ is called \emph{extension-closed} if for each $N, K\in\var{H}$, all extensions of $K$ by $N$ are in $\var{H}$. Among the examples given above, $\var{G}$, $\var{G}_p$ and $\var{G}_{\sol}$ are extension-closed, while $\var{G}_{\nil}$ and $\var{Ab}$ are not.

We denote by $\freepro_\var{H}(A)$ the free pro-$\var{H}$ group on a set $A$. As the name suggests, these are the free objects in the category of \emph{pro-$\var{H}$ groups} (residually $\var{H}$ compact groups). A detailed construction of free pro-$\var{H}$ groups can be found in \cite[Section~3]{Ribes2010a}. We call groups of the form $\freepro_\var{H}(A)$, where $\var{H}$ is a non-trivial pseudovariety, \emph{relatively free} profinite groups. For emphasis, we say that the groups $\freepro(A) = \freepro_\var{G}(A)$ are \emph{absolutely free}. It was shown in \cite[Theorems~7.2 and~7.6]{Almeida2013} that the Schützenberger groups of the substitutions $\tau$ and $\alpha$ presented in Examples~\ref{e:morse1} and~\ref{e:almeida} are not relatively free. In fact, at time of writing, there is no known example of a primitive substitution whose Schützenberger group is relatively free but not absolutely free. Part of our conclusion for this section, which is presented in Corollary~\ref{c:relfree}, states that the Schützenberger group of a primitive invertible substitution is absolutely free if and only if it is relatively free.

Let $\varphi\from A^*\to A^*$ be a primitive substitution and $\var{H}$ be a pseudovariety of groups. We denote by $\widehat{\varphi}_\var{H}$ the continuous endomorphism of $\freepro_\var{H}(A)$ naturally induced by $\varphi$. We say that $\varphi$ is \emph{$\var{H}$-invertible} if $\widehat{\varphi}_\var{H}$ is an automorphism, or equivalently if $\widehat{\varphi}_\var{H}^\omega$ is the identity \cite[Proposition~3.7.4]{Almeida2020a}. Note that $\var{G}$-invertibility is equivalent to invertibility in the usual sense. More explicitly, a primitive substitution extends to an automorphism of $\free(A)$ if and only if it extends to an automorphism of $\freepro(A)$ \cite[Proposition~4.6.8]{Almeida2020a}.

Primitive substitutions also determine pseudovarieties of their own, which have been introduced in \cite{Almeida2013}: let $\var{V}(\varphi)$ be the pseudovariety generated by the finite quotients of $G(\varphi)$, that is, by the finite groups that are continuous homomorphic images of $G(\varphi)$. Here is the main result of this section.
\begin{theorem}\label{t:relinv}
    Let $\var{H}$ be a non-trivial extension-closed pseudovariety and $\varphi$ be a primitive, aperiodic and $\var{H}$-invertible substitution. Then, $\var{H}$ is contained in $\var{V}(\varphi)$. 
\end{theorem}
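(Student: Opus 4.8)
The plan is to show that every finite group $H\in\var{H}$ is a continuous homomorphic image of $G(\varphi)$, which exactly says $H\in\var{V}(\varphi)$. By Theorem~\ref{t:return} (applied to any connection of $\varphi$, using that $\varphi$ is aperiodic) together with Remark~\ref{r:proper}, some return substitution $\psi = \retsubs{\varphi}{u,v}$ defines an $\omega$-presentation of $G(\varphi)$; moreover $\psi$ is proper and, crucially, we will want it to inherit $\var{H}$-invertibility from $\varphi$. So the first technical point is: \emph{if $\varphi$ is $\var{H}$-invertible, then so is every return substitution $\retsubs{\varphi}{u,v}$}. This should follow from the defining relation $\loo{u,v}\circ\retsubs{\varphi}{u,v} = \varphi^k\circ\loo{u,v}$, since $\loo{u,v}$ maps the generators to return words, which form a code; the idea is that $\loo{u,v}$ induces an injective continuous homomorphism $\freepro_\var{H}(A_{u,v})\to\freepro_\var{H}(A)$ whose image is $\widehat{\varphi}_\var{H}^k$-invariant (as $\varphi^k$ restricts to a substitution of the free submonoid on $\ret{u,v}$), and an injective endomorphism of a relatively free profinite group of finite rank whose $\var{H}$-image agrees with an automorphism must itself be an automorphism on that invariant subgroup. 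Here one uses the $\var{H}$-analogue of the Hopfian property and the fact that closed subgroups of $\freepro_\var{H}(A)$ need not be relatively free in general, so one must be a little careful — this is where I expect friction.

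Once we know $\psi$ is a proper, $\var{H}$-invertible substitution defining an $\omega$-presentation of $G(\varphi)$, the heart of the argument is to exhibit, for each $H\in\var{H}$, a tuple $t\in H^{A_{u,v}}$ generating $H$ with $\widehat{\psi}^n_H(t)=t$ for some $n\geq 1$; Proposition~\ref{p:tuple} then gives $H$ as a quotient of $G(\varphi)$ and we are done. The natural choice is to take $H$ itself generated by a tuple $t_0$ and look for a fixed tuple in its orbit under the action $\widehat{\psi}_H$. The key observation is that $\widehat{\psi}_H\from H^{A_{u,v}}\to H^{A_{u,v}}$ is the map $t\mapsto (a\mapsto \widehat{t}(\psi(a)))$, and since $\psi$ is $\var{H}$-invertible, $\widehat{\psi}_\var{H}$ is an automorphism of $\freepro_\var{H}(A_{u,v})$; its induced action on $H^{A_{u,v}} = \mathrm{Hom}(\freepro_\var{H}(A_{u,v}),H)$ by precomposition is therefore a \emph{bijection} of the finite set $H^{A_{u,v}}$ — indeed a permutation — so some power of it is the identity, giving $\widehat{\psi}^n_H(t)=t$ for \emph{every} $t$, in particular for a generating tuple. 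Extension-closedness of $\var{H}$ is what guarantees $H^{A_{u,v}}$, or rather the relevant finite quotient through which $\widehat{\psi}_H$ factors, stays inside $\var{H}$ so that $\var{H}$-invertibility of $\psi$ is the right hypothesis to invoke; this is the place the hypothesis on $\var{H}$ is really used.

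The main obstacle, as flagged, is the transfer of $\var{H}$-invertibility from $\varphi$ to its return substitutions: unlike in the absolute case (Remark~\ref{r:proper}, which only transfers the $\omega$-presentation, not invertibility), one has to argue at the level of relatively free profinite groups, and the image of $\loo{u,v}$ inside $\freepro_\var{H}(A)$ need not be free pro-$\var{H}$, so the clean Nielsen–Schreier/Hopfian argument from Proposition~\ref{p:restriction} is not directly available. I would try to sidestep this by working with the $\var{H}$-completion of the free group on the code $\ret{u,v}$ and using that $\loo{u,v}$ is a code morphism to get injectivity of the induced map, then deduce invertibility of $\retsubs{\varphi}{u,v}$ on that subgroup from $\widehat{\varphi}^k_\var{H}$ being an automorphism together with finiteness of rank and a counting argument on finite quotients. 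Alternatively, if that proves delicate, one can retreat to the proper case: pass to a proper power or conjugate of $\varphi$ that still generates $G(\varphi)$ up to isomorphism, apply Remark~\ref{r:proper} directly, and run the orbit-counting argument on $\widehat{\varphi}_H$ itself — this avoids return substitutions entirely at the cost of checking that $\var{H}$-invertibility is preserved under the reduction to a proper substitution, which is comparatively routine since powers of automorphisms are automorphisms.
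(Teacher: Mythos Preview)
Your orbit-counting idea is sound as far as it goes: if $\psi=\retsubs{\varphi}{u,v}$ is $\var{H}$-invertible and $H\in\var{H}$, then indeed every homomorphism $\freepro(A_{u,v})\to H$ factors through $\freepro_\var{H}(A_{u,v})$, so $\widehat{\psi}_H$ acts as a permutation of $H^{A_{u,v}}$ and every tuple is periodic. But this only lets you apply Proposition~\ref{p:tuple} to groups $H$ admitting a generating tuple in $H^{A_{u,v}}$, i.e.\ to groups with $d(H)\leq\Card(A_{u,v})$. Since the return sets of a primitive substitution have uniformly bounded cardinality, your argument yields at best that $\var{V}(\varphi)$ contains every $d$-generated member of $\var{H}$ for some fixed $d$; it does not reach groups in $\var{H}$ requiring more generators. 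Closing this gap is exactly the content of Lemma~\ref{l:2generated} in the paper (the Neumann--Neumann embedding), and \emph{this} is where extension-closedness is genuinely needed---not, as you suggest, in making $\widehat{\psi}_H$ a bijection, which requires only $H\in\var{H}$.

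There is a second structural difference worth noting. You spend most of your plan on transferring $\var{H}$-invertibility from $\varphi$ to $\retsubs{\varphi}{u,v}$, and you correctly flag this as delicate (the map $\loo{u,v}$ need not even be injective on the free group, so the Hopfian-style reduction you sketch is not available without further work). The paper simply bypasses this issue: it never asserts that $\retsubs{\varphi}{u,v}$ is $\var{H}$-invertible. Instead, from the commuting square $\widetilde{\loo{u,v}}\circ\widehat{\retsubs{\varphi}{u,v}}^\omega=\widehat{\varphi}_\var{H}^\omega\circ\widetilde{\loo{u,v}}=\widetilde{\loo{u,v}}$ it reads off directly that $\img(\widetilde{\loo{u,v}})=\overline{\langle\ret{u,v}\rangle}_\var{H}$ is a continuous quotient of $G(\varphi)$; extension-closedness then enters via Lemma~\ref{l:closure} to identify this closure as a free pro-$\var{H}$ group of rank at least~$2$ (aperiodicity), and once more via Lemma~\ref{l:2generated} to pass from $2$-generated groups to all of $\var{H}$. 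Your fallback of ``passing to a proper power or conjugate'' does not help either: powers of a non-proper substitution are still non-proper, so Remark~\ref{r:proper} is unavailable in general.
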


The proof of this theorem relies on a number of intermediate results, starting with the technical lemma stated below. If $\var{H}$ is a non-trivial extension-closed pseudovariety, then free groups are residually $\var{H}$ \cite[Proposition~3.3.15]{Ribes2010a}, hence there is a natural embedding $\free(A)\hookrightarrow\freepro_\var{H}(A)$ for every alphabet $A$. The induced topology on $\free(A)$ is called the \emph{pro-$\var{H}$ topology}. If $X$ is a subset of $\free(A)$, then we denote its topological closure in $\freepro_\var{H}(A)$ by $\overline{X}_\var{H}$. On the other hand, we denote by $\cl_\var{H}(X)$ the closure of $X$ in the pro-$\var{H}$ topology of $\free(A)$. 

The proof of the next lemma is mostly a matter of combining several known results. We provide a proof for the sake of completeness. We chose to rely on \cite{Margolis2001,Ribes1994,Ribes2010a}, but let us mention that results from \cite{Coulbois2003} could be used as well. Alternatively, one could adapt the proof of \cite[Proposition~4.6.5]{Almeida2020a}, which can be partly traced back to \cite[Lemma~4.2]{Almeida2001}. 
\begin{lemma}\label{l:closure}
    Let $A$ be a finite set, $K$ be a finitely generated subgroup of $\free(A)$ and $\var{H}$ be a non-trivial extension-closed pseudovariety. Then, $\overline{K}_\var{H}$ is a free pro-$\var{H}$ group of rank at most that of $K$.
\end{lemma}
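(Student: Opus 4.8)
The plan is to identify $\overline{K}_\var{H}$ with the free pro-$\var{H}$ group on a finite basis by controlling the pro-$\var{H}$ topology induced on $K$. The key point is a classical fact: for a non-trivial extension-closed pseudovariety $\var{H}$, finitely generated subgroups of a free group are \emph{closed} in the pro-$\var{H}$ topology, and moreover the profinite topology restricts well to them. First I would invoke the result of Ribes and Zalesskii (see \cite[Theorem~2.6]{Ribes1994}, and also \cite{Margolis2001} for the pro-$p$ and related cases) guaranteeing that for a finitely generated subgroup $K \leq \free(A)$, the pro-$\var{H}$ topology of $\free(A)$ induces on $K$ exactly its own full pro-$\var{H}$ topology as an abstract free group; equivalently, $\cl_\var{H}(K) = K$ and the inclusion $K \hookrightarrow \free(A)$ is a topological embedding for the respective pro-$\var{H}$ topologies. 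By the Nielsen--Schreier theorem $K \isom \free(B)$ for some finite set $B$ with $\Card(B)$ equal to the rank of $K$ (finite because $K$ is finitely generated).

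Next I would pass to completions. Since $K$ is closed in $\free(A)$ for the pro-$\var{H}$ topology, and the inclusion is a topological embedding, taking closures in $\freepro_\var{H}(A)$ gives $\overline{K}_\var{H} \isom$ the completion of $K$ with respect to its induced topology, which by the previous paragraph is the pro-$\var{H}$ completion of the abstract free group $K \isom \free(B)$. That completion is by definition $\freepro_\var{H}(B)$. Concretely, one checks that the continuous homomorphism $\freepro_\var{H}(B) \to \freepro_\var{H}(A)$ induced by the inclusion $\free(B) \isom K \hookrightarrow \free(A)$ is injective (this uses that the topology on $K$ is not coarser than its own pro-$\var{H}$ topology, i.e.\ enough pro-$\var{H}$ quotients of $K$ extend to $\free(A)$, which is precisely the Ribes--Zalesskii statement) and has image $\overline{K}_\var{H}$ (the image is closed, being a continuous image of a compact set, and contains $K$, which is dense in $\overline{K}_\var{H}$). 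Hence $\overline{K}_\var{H} \isom \freepro_\var{H}(B)$, a free pro-$\var{H}$ group of rank $\Card(B) = \operatorname{rank}(K)$, giving even the exact rank, a fortiori at most that of $K$.

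The main obstacle is purely bibliographic rather than conceptual: one must cite the correct separability/embedding statement at the right level of generality. The subtlety is that finitely generated subgroups of free groups are known to be closed in the pro-$p$ topology and in the full profinite topology, but the general extension-closed case needs the right reference; here \cite[Section~3]{Ribes2010a} (free pro-$\var{H}$ groups and the fact that free groups are residually $\var{H}$ for extension-closed $\var{H}$) together with \cite{Ribes1994} and \cite{Margolis2001} supply what is needed, and as noted one could instead quote \cite{Coulbois2003} or adapt the argument of \cite[Proposition~4.6.5]{Almeida2020a}. Once the embedding-of-topologies fact is in hand, the rest is the routine completion argument sketched above, and the rank bound is immediate from Nielsen--Schreier.
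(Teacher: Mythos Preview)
Your approach is essentially the paper's: extend the inclusion of the relevant free subgroup to a continuous homomorphism between pro-$\var{H}$ completions, then use a topology-coincidence result to conclude that this extension is injective with image $\overline{K}_\var{H}$. The only variation is that you work directly with $K$ and invoke the stronger fact that $K$ is itself pro-$\var{H}$ closed in $\free(A)$ (true, via $\var{G}_p\subseteq\var{H}$ and Ribes--Zalesskii, though your ``equivalently'' conflates closedness with the topology-coincidence statement), whereas the paper sidesteps this by passing to $\cl_\var{H}(K)$, citing \cite[Proposition~3.4]{Ribes1994} for the rank bound on that closure and \cite[Proposition~2.9]{Margolis2001} for the topology coincidence there; as a bonus your route gives the exact rank rather than just an upper bound.
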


\begin{proof}
    By \cite[Proposition~3.4]{Ribes1994}, $\cl_\var{H}(K)$ is a subgroup of $\free(A)$ of rank at most that of $K$, so we may write $\cl_\var{H}(K)=\free(B)$, where $\Card(B)\leq\Card(A)$. Let $\iota\from\free(B)\hookrightarrow\free(A)$ be the inclusion, and denote by $\widehat{\iota}_\var{H}$ its extension to a continuous homormophism between the respective pro-$\var{H}$ completions. By \cite[Proposition~3.3.6]{Ribes2010a}, the pro-$\var{H}$ completion of a free group of finite rank is a free profinite group of the same rank, hence  we have $\widehat{\iota}_\var{H}\from\freepro_\var{H}(B)\to\freepro_\var{H}(A)$. Moreover, note that 
    \begin{equation*}
        \img(\widehat{\iota}_\var{H}) = \overline{\img(\iota)}_\var{H} = \overline{\cl_\var{H}(K)}_\var{H} = \overline{K}_\var{H},
    \end{equation*}
    where the leftmost equality follows from \cite[Lemma~3.2.4]{Ribes2010a}.  Therefore, it suffices to show that the pro-$\var{H}$ extension $\widehat{\iota}_\var{H}$ is injective. By \cite[Lemma~3.2.6]{Ribes2010a}, this is equivalent to showing that the pro-$\var{H}$ topology of $\cl_\var{H}(K)$ coincides with the subspace topology induced by the pro-$\var{H}$ topology of $\free(A)$. This last statement holds by the last part of \cite[Proposition~2.9]{Margolis2001}.
\end{proof}

We now turn to the following proposition, which is one of the main ingredients in the proof of Theorem~\ref{t:relinv}.
\begin{proposition}\label{p:freequotients}
    Let $\var{H}$ be a non-trivial extension-closed pseudovariety and $\varphi$ be a primitive, aperiodic and $\var{H}$-invertible substitution. Then, $G(\varphi)$ has a continuous homomorphic image isomorphic to a free pro-$\var{H}$ group of rank at least 2.
\end{proposition}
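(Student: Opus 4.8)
The idea is to combine Theorem~\ref{t:return} with Proposition~\ref{p:restriction} and Lemma~\ref{l:closure}. Start from a connection $(u,v)$ of $\varphi$ and set $\psi = \retsubs{\varphi}{u,v}$, which by Theorem~\ref{t:return} defines an $\omega$\hyp{}presentation of $G(\varphi)$; by Lemma~\ref{l:kovacs}, $G(\varphi)\isom\img(\widehat{\psi}^\omega)$. Since $\varphi$ is aperiodic, $\Card(A_{u,v})\geq 2$, so $\psi$ acts on a free group of rank at least $2$. The hoped-for outcome is that $\img(\widehat{\psi}^\omega)$ itself, or an image of it, is a free pro-$\var{H}$ group of rank at least $2$; to make this work one wants to understand $\img(\widehat{\psi}^\omega)$ as the closure of a finitely generated subgroup of $\free(A_{u,v})$, which is where Lemma~\ref{l:closure} enters.

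\smallskip

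\textbf{Key steps.} First I would argue that, using Proposition~\ref{p:restriction} repeatedly, one may replace $\psi$ by a restriction $\rest{\psi}{n}$ that defines the \emph{same} $\omega$\hyp{}presentation but is injective (Corollary~\ref{c:injective} guarantees the number of generators eventually stabilizes, and the stable restriction is injective). So without loss of generality $\psi\from\free(B)\to\free(B)$ is injective, with $\Card(B)\geq 2$, and $G(\varphi)\isom\img(\widehat{\psi}^\omega)$. Second, consider $K = \img(\psi^\omega)$ computed inside the \emph{abstract} free group --- more precisely, work with the ascending behaviour: since $\psi$ is injective, $\img(\psi^{n+1})\subseteq\img(\psi^n)$ and one can take the intersection $L = \bigcap_n \img(\psi^n)$, a finitely generated subgroup of $\free(B)$ (here one must be careful: the intersection of a descending chain of free factors-like subgroups need not a priori be finitely generated, so this is a point to check, perhaps via the fact that ranks are non-increasing and bounded). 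Third, I would identify $\img(\widehat{\psi}^\omega)$ with $\overline{L}$ in $\freepro(B)$, and more to the point with $\overline{L}_\var{H}$ after pushing everything into $\freepro_\var{H}(B)$ --- this uses that $\var{H}$-invertibility of $\varphi$ transfers to $\var{H}$-invertibility of the return substitution, or at least that the relevant $\omega$\hyp{}power stabilizes rank correctly in the pro-$\var{H}$ world. Then Lemma~\ref{l:closure} tells us $\overline{L}_\var{H}$ is a free pro-$\var{H}$ group. Fourth, to get the rank-at-least-$2$ conclusion: the abelianized incidence matrix argument (as in the proof of Proposition~\ref{p:minimality}) shows $(\zz/p\zz)^{\Card(B)}$ is a quotient of $G(\varphi)$ for suitable primes $p$, forcing $d(G(\varphi))\geq 2$; combined with the identification of a pro-$\var{H}$-free quotient, one concludes the free pro-$\var{H}$ quotient has rank $\geq 2$.

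\smallskip

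\textbf{Main obstacle.} The delicate part is connecting $\var{H}$-invertibility of $\varphi$ --- a statement about $\widehat{\varphi}_\var{H}$ on $\freepro_\var{H}(A)$ --- to the structure of $\img(\widehat{\psi}^\omega)$ for the \emph{return} substitution $\psi$ on the different alphabet $B$, and in particular ensuring that the closure one takes really is a closure in the pro-$\var{H}$ topology so that Lemma~\ref{l:closure} applies and yields a \emph{free} pro-$\var{H}$ group rather than just some pro-$\var{H}$ group. There are two sub-issues: (i) showing the relevant intersection/image subgroup of $\free(B)$ is finitely generated, and (ii) showing the pro-$\var{H}$ topology is the one governing the completion here, which presumably requires that $\var{H}$-invertibility of $\varphi$ implies $\var{H}$-invertibility of $\retsubs{\varphi}{u,v}$ --- a compatibility between return substitutions and relative invertibility that would need its own small argument (likely via the fact that $\loo{u,v}$ realizes $\free(B)$ as a free factor-like subgroup of $\free(A)$ and conjugating automorphisms restrict well). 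I expect (ii) to be the real crux; once it is in hand, Lemma~\ref{l:closure} and the incidence-matrix rank bound finish the proof quickly.
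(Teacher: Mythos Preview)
Your main obstacle (ii) is not merely delicate; it is false. The claim that $\var{H}$-invertibility of $\varphi$ implies $\var{H}$-invertibility of $\retsubs{\varphi}{u,v}$ fails already for $\var{H}=\var{G}$: the counterexample $\xi$ in Section~\ref{s:invnonfree} is invertible, yet its return substitution $\retsubs{\xi}{1,0}$ is not even injective on $\free(A_7)$ (its image has rank~5), so certainly not an automorphism. Your plan therefore cannot be completed along these lines, and the preliminary manoeuvres (passing to an injective restriction, analysing $\bigcap_n\img(\psi^n)$) become irrelevant once (ii) collapses.

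The paper's argument avoids this entirely by never attempting to work in $\freepro_\var{H}(A_{u,v})$. The key idea you are missing is to use the intertwining map $\loo{u,v}$ to pass \emph{directly} from $\freepro(A_{u,v})$ into $\freepro_\var{H}(A)$. Write $\widetilde{\loo{u,v}}\from\freepro(A_{u,v})\to\freepro_\var{H}(A)$ for the continuous extension of $\loo{u,v}$. The defining relation $\loo{u,v}\circ\retsubs{\varphi}{u,v}=\varphi^k\circ\loo{u,v}$ gives $\widetilde{\loo{u,v}}\circ\widehat{\retsubs{\varphi}{u,v}}^\omega = \widehat{\varphi}_\var{H}^{\,\omega}\circ\widetilde{\loo{u,v}}$, and the right-hand side equals $\widetilde{\loo{u,v}}$ because $\varphi$ is $\var{H}$-invertible. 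Hence $\widetilde{\loo{u,v}}$ restricted to $\img(\widehat{\retsubs{\varphi}{u,v}}^\omega)\isom G(\varphi)$ is already surjective onto $\img(\widetilde{\loo{u,v}})=\overline{K}_\var{H}$, where $K=\langle\ret{u,v}\rangle\leq\free(A)$. Now Lemma~\ref{l:closure} applies immediately: $\overline{K}_\var{H}$ is free pro-$\var{H}$ of finite rank. No transfer of invertibility, no descending intersections, no question of finite generation beyond the return set itself.

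For the rank bound, the paper also takes a shorter path than your incidence-matrix argument: since $\ret{u,v}$ is a code with at least two elements (by aperiodicity), it generates a non-commutative submonoid of $A^*\subseteq\overline{K}_\var{H}$, so $\overline{K}_\var{H}$ is non-abelian and therefore has rank at least~2.
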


\begin{proof}
    Fix a connection $(u,v)$ of $\varphi$. By Theorem~\ref{t:return}, $\retsubs{\varphi}{u,v}$ defines an $\omega$\hyp{}presentation of $G(\varphi)$ and by Lemma~\ref{l:kovacs}, it follows that $G(\varphi)\isom\img(\widehat{\retsubs{\varphi}{u,v}}^\omega)$. Let $\widetilde{\loo{u,v}}$ be the natural extension of $\loo{u,v}$ to a continuous homomorphism $\widetilde{\loo{u,v}}\from\freepro(A_{u,v})\to \freepro_\var{H}(A)$. Recall that $\loo{u,v}\circ\retsubs{\varphi}{u,v} = \varphi^k\circ\loo{u,v}$, where $k$ is the order of the connection $(u,v)$, hence the following diagram is commutative:
    \begin{center}
        \begin{tikzcd}
            \freepro(A_{u,v}) \rar{\widetilde{\loo{u,v}}} \dar{\widehat{\retsubs{\varphi}{u,v}}^\omega} & \freepro_\var{H}(A) \dar{\widehat{\varphi}_\var{H}^\omega} \\
            \freepro(A_{u,v}) \rar{\widetilde{\loo{u,v}}} & \freepro_\var{H}(A).
        \end{tikzcd}.
    \end{center}
    Since $\varphi$ is $\var{H}$-invertible, $\widehat{\varphi}_{\var{H}}^\omega$ is the identity, hence $\img(\widetilde{\loo{u,v}})$ is a continuous homomorphic image of $G(\varphi)$. But notice that $\img(\widetilde{\loo{u,v}}) = \overline{\img(\loo{u,v})}_\var{H} = \overline{K}_{\var{H}}$, where $K$ is the subgroup of $\free(A)$ generated by the return set $\ret{u,v}$. By Lemma~\ref{l:closure}, $\overline{K}_{\var{H}}$ is free pro-$\var{H}$ group of finite rank. It remains only to show that this group has rank at least 2, or alternatively that this group is not commutative. But notice that $\overline{K}_{\var{H}}$ contains the submonoid of $A^*$ generated by $\ret{u,v}$, of which $\ret{u,v}$ itself forms a basis by \cite[Lemma~17]{Durand1999}. Since $\varphi$ is aperiodic, $\ret{u,v}$ must have at least 2 elements, and therefore it generates a non-commutative submonoid of $A^*$, thus concluding the proof.
\end{proof}

Next is another lemma, which all but completes the proof of our main result. This lemma is a consequence of an embedding result, due to Neumann and Neumann, dating back to 1959 \cite{Neumann1959}.
\begin{lemma}\label{l:2generated}
    Let $\var{H}$ be a non-trivial extension-closed pseudovariety. Then $\var{H}$ is generated, as a pseudovariety, by its 2-generated members.
\end{lemma}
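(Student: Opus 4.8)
The plan is to unwind what it means for $\var{H}$ to be \emph{generated, as a pseudovariety, by} a subclass $\mathcal{C}$: this generated pseudovariety consists of the finite groups that divide (i.e.\ are quotients of subgroups of) a finite direct product of members of $\mathcal{C}$. Since every $2$-generated member of $\var{H}$ trivially lies in $\var{H}$, it suffices to show that every $H\in\var{H}$ divides a $2$-generated member of $\var{H}$; in fact I will establish the stronger statement that each $H\in\var{H}$ \emph{embeds} into some $2$-generated group $K\in\var{H}$.

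First I would record two observations about extension-closed pseudovarieties. (i) If $A,B\in\var{H}$ then the wreath product $A\wr B$ lies in $\var{H}$: it is an extension of $B$ by its base group $A^{|B|}$, and $\var{H}$ is closed under direct products and extensions; hence every iterated wreath product of members of $\var{H}$ is again in $\var{H}$. (ii) Being non-trivial, $\var{H}$ contains $\zz/p\zz$ for some prime $p$, and then $\zz/p^{n}\zz\in\var{H}$ for all $n$ by repeated use of extension-closure; more generally $\zz/n\zz\in\var{H}$ whenever every prime factor of $n$ divides the order of some member of $\var{H}$ (take a direct product of the relevant cyclic $p$-groups). In particular $\zz/|H|^{c}\zz\in\var{H}$ for every $H\in\var{H}$ and every $c\geq 1$.

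The remaining ingredient is the Neumann--Neumann embedding \cite{Neumann1959}, used in the following finitary form: if $H$ is a finite group generated by $d$ elements and $C$ is a finite cyclic group whose underlying group $\zz/|C|\zz$ contains a $d$-element set with pairwise distinct differences (a Sidon/perfect-difference-set condition, satisfied once $|C|$ is large relative to $d$), then $H$ embeds into a $2$-generated subgroup of $H\wr C$. I would apply this to $H\in\var{H}$, taking $d=|H|$ generators (listing all elements of $H$) and $C=\zz/|H|^{c}\zz$ for $c$ large enough that this cyclic group carries the required Sidon set. This yields a $2$-generated subgroup $K\le H\wr C$ with $H\hookrightarrow K$. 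By observations (i) and (ii), $H\wr C\in\var{H}$, hence $K\in\var{H}$ as a subgroup. Thus every $H\in\var{H}$ embeds into a $2$-generated member of $\var{H}$, which proves the lemma.

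The main obstacle is exactly the interplay between the classical construction and membership in $\var{H}$: the black-box statement ``every finite group embeds in a $2$-generated finite group'' is useless here because that overgroup need not lie in $\var{H}$. This forces me to (a) use the Neumann--Neumann construction in its explicit wreath-product form, so that extension-closure keeps everything inside $\var{H}$, and (b) choose the cyclic ``top'' group so that its order involves only primes already present in $\var{H}$ — possible because that order only needs to be large compared with the number of generators of $H$, and both the generator count and $|C|$ can be taken to be suitable powers of $|H|$. Verifying that the Sidon-type hypothesis of the Neumann--Neumann lemma can indeed be met inside such a $\zz/|H|^{c}\zz$ (for $c$ sufficiently large) is the one quantitative point to check.
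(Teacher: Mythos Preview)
Your overall strategy matches the paper's: embed each $H\in\var{H}$ into a $2$-generated member of $\var{H}$ via a wreath-product construction, using extension-closure to keep iterated wreath products inside $\var{H}$, and arranging that the cyclic ``top'' factors have orders built from primes already present in $\var{H}$. Both arguments hinge on exactly the same two observations you isolate in (i) and (ii).

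The discrepancy is in the embedding lemma you invoke. What \cite{Neumann1959} actually furnishes --- and what the paper uses --- is a \emph{double} wreath product: if $L=\langle x_1,\dots,x_d\rangle$ with $|x_i|=n_i$, then for every $m\ge 4d$ and every $n$ with $\lcm(n_1,\dots,n_d)\mid n$, the group $L$ embeds in a $2$-generated subgroup of $(L\wr\zz/n\zz)\wr\zz/m\zz$. The paper then chooses $n=\lcm(n_1,\dots,n_d)$ (so $\zz/n\zz\le\prod_i\zz/n_i\zz\in\var{H}$) and $m=p^k$ for a prime $p$ with $\zz/p\zz\in\var{H}$ and $p^k\ge 4d$. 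Your ``finitary form'' --- a \emph{single} wreath product $H\wr C$ with the generators placed on a Sidon set in $C$ --- is not what Neumann--Neumann prove, and it is not obviously true in general: the Sidon condition guarantees that each commutator $[f^{c^j},f^{c^k}]$ is supported at a single coordinate (its value there being some $[g_a,g_b]$), but for non-abelian $H$ this yields only isolated \emph{commutators} of the generators, not the generators themselves, and it is unclear how to recover a full copy of $H$ inside $\langle f,c\rangle$ from that. So either supply an independent proof of your single-wreath claim, or revert to the double-wreath Neumann--Neumann statement; once you do the latter, your proof coincides with the paper's essentially verbatim.
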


\begin{proof}
    Let $L\in\var{H}$ be generated by non-identity elements $x_1,\dots, x_d$ of respective order $n_1,\dots, n_d$. The main construction of \cite{Neumann1959} implies that for all integers $m,n$ such that $m\geq 4d$ and $\lcm(n_1,\dots,n_d)\mid n$, we may embed $L$ in a 2-generated subgroup of the following wreath product:
    \begin{equation*}
        (L\wr \zz/n\zz)\wr\zz/m\zz.
    \end{equation*}
    Since $\var{H}$ is extension-closed, such a wreath product is in $\var{H}$ provided all the factors are in $\var{H}$. Therefore, it suffices to show that $m$ and $n$ can be chosen so that $\zz/m\zz, \zz/n\zz\in\var{H}$. For $n$, we may simply take $n=\lcm(n_1,\dots, n_d)$. Indeed, it then follows that $\zz/n\zz$ is a subgroup of $\zz/n_1\zz\times\dots\times\zz/n_d\zz$. This last group in turn lies in $\var{H}$ because, for $i=1,\dots,d$, the subgroup of $L$ generated by $x_i$ is isomorphic to $\zz/n_i\zz$. For $m$, choose some prime $p$ such that $\zz/p\zz\in\var{H}$, for instance a prime that divides one of the $n_i$. Since $\var{H}$ is extension-closed, it contains the extension-closed pseudovariety generated by $\zz/p\zz$, which is in fact $\var{G}_p$. In particular, $\var{H}$ contains $\zz/p^k\zz$ for all positive integers $k$. Taking $k\geq\log_p(4d)$, we find that $m=p^k$ fulfills all the required conditions.
\end{proof}

The proof of Theorem~\ref{t:relinv} is now a straightforward matter.
\begin{proof}[Proof of Theorem~\ref{t:relinv}]
    By Proposition~\ref{p:freequotients}, $\var{V}(\varphi)$ contains all 2-generated members of $\var{H}$. But by Lemma~\ref{l:2generated}, these groups generate $\var{H}$, hence $\var{H}\subseteq\var{V}(\varphi)$.
\end{proof}

Next, we proceed to highlight some consequences of our main result. A result of Almeida implies that a substitution is $\var{G}_p$-invertible if and only if $\det(M(\varphi))$ is not divisible by $p$ \cite[Proposition~5.2]{Almeida2002a}. Combining this with Theorem~\ref{t:relinv}, we immediately obtain the following: 
\begin{corollary}\label{c:primes}
    Let $\varphi$ be a primitive aperiodic substitution. Then $\var{G}_p$ is contained in $\var{V}(\varphi)$ for every prime $p$ that does not divide $\det(M(\varphi))$. In particular, if $\det(M(\varphi))$ is not 0, this must be the case for cofinitely many primes.
\end{corollary}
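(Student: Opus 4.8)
The plan is to deduce this directly from Theorem~\ref{t:relinv} together with the arithmetic description of $\var{G}_p$-invertibility. First I would recall two facts about the pseudovariety $\var{G}_p$ of finite $p$-groups: it is non-trivial, since it contains $\zz/p\zz$, and, as noted among the examples in this section, it is extension-closed. Hence Theorem~\ref{t:relinv} is applicable with $\var{H}=\var{G}_p$ to any primitive aperiodic substitution that happens to be $\var{G}_p$-invertible.

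Next I would invoke the result of Almeida recorded in \cite[Proposition~5.2]{Almeida2002a}, according to which a substitution is $\var{G}_p$-invertible if and only if $p$ does not divide $\det(M(\varphi))$. Thus, for every prime $p$ with $p\nmid\det(M(\varphi))$, the substitution $\varphi$ is primitive, aperiodic and $\var{G}_p$-invertible, and Theorem~\ref{t:relinv} gives $\var{G}_p\subseteq\var{V}(\varphi)$. This settles the first assertion; note that when $\det(M(\varphi))=0$ there are no such primes, so the assertion is then vacuously true.

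For the final clause, I would simply observe that a nonzero integer has only finitely many prime divisors; hence, if $\det(M(\varphi))\neq 0$, all but finitely many primes $p$ satisfy $p\nmid\det(M(\varphi))$, and for each of them we have just shown that $\var{G}_p\subseteq\var{V}(\varphi)$. There is no genuine obstacle here: the corollary is an immediate combination of two earlier results, and the only points that require attention are the verification that $\var{G}_p$ meets the hypotheses of Theorem~\ref{t:relinv} (non-triviality and extension-closure) and the harmless degenerate case $\det(M(\varphi))=0$.
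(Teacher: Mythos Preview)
Your proof is correct and follows exactly the same approach as the paper: invoke \cite[Proposition~5.2]{Almeida2002a} to translate the condition $p\nmid\det(M(\varphi))$ into $\var{G}_p$-invertibility, then apply Theorem~\ref{t:relinv} with $\var{H}=\var{G}_p$. The paper's argument is just a terser version of what you wrote.
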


We now wish to show that Theorem~\ref{t:relinv} also holds for $\var{G}_{\nil}$, even though it is not extension-closed. By \cite[Corollary~5.3]{Almeida2002a}, a substitution $\varphi$ is $\var{G}_{\nil}$-invertible if and only if $\det(M(\varphi))=\pm1$. Substitutions satisfying the latter condition are called \emph{unimodular}. It turns out that for primitive substitutions, unimodularity implies aperiodicity. This is mostly thanks to a result of Holton and Zamboni from \cite{Holton1998}, as we now proceed to show.
\begin{proposition}\label{p:aperiodic}
    A primitive unimodular substitution $\varphi$ is aperiodic.
\end{proposition}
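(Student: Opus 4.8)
The plan is to argue by contradiction: suppose $\varphi$ is primitive, unimodular and \emph{periodic}, and derive a contradiction. Since $X(\varphi)$ is periodic, $L(\varphi)$ is the set of factors of $w^{\omega}$ for some word $w$, which we may assume to be \emph{primitive} in the combinatorial sense (not a proper power) after replacing it by its primitive root; then $X(\varphi)=\{\sigma^{i}({}^{\omega}w^{\omega}):0\le i<|w|\}$, and by primitivity every letter of $A$ occurs in $L(\varphi)$, hence in $w$. Consequently the vector $\mathbf{p}(w)\in\nn^{A}$ with $\mathbf{p}(w)_{a}=|w|_{a}$ has all entries strictly positive, and (reading $\mathbf{p}$ as a row vector and using $M(\varphi)_{a,b}=|\varphi(a)|_{b}$) it satisfies $\mathbf{p}(\varphi(u))=\mathbf{p}(u)\,M(\varphi)$ for every word $u$.

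The first substantive step is to prove that $\varphi(w)$ is a cyclic conjugate of $w^{k}$ for some integer $k\ge 1$. This is essentially the input supplied by the cited result of Holton and Zamboni, but it can also be obtained directly: since $\varphi(w)^{m}=\varphi(w^{m})\in L(\varphi)$ for all $m$, the eventually periodic word $w^{\omega}$ contains arbitrarily long powers of $z:=\varphi(w)$, so the Fine--Wilf theorem applied to a sufficiently long such occurrence shows that $z$ and $w$ share a period; as $w$ is primitive, this forces $|w|$ to divide $|z|$ and $z$ to be a cyclic conjugate of $w^{\,|z|/|w|}$. (Alternatively, $\varphi$ maps $X(\varphi)$ into itself and hence sends ${}^{\omega}w^{\omega}$ to one of its shifts, giving the same conclusion with $k=|\varphi(w)|/|w|$.)

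Abelianizing this identity is the next step. Conjugate words have equal Parikh vectors, so $\mathbf{p}(w)\,M(\varphi)=\mathbf{p}(\varphi(w))=k\,\mathbf{p}(w)$; thus $k$ is a positive integer eigenvalue of $M(\varphi)$ with the strictly positive left eigenvector $\mathbf{p}(w)$, and by the Perron--Frobenius theorem applied to the primitive matrix $M(\varphi)$ it must be the Perron eigenvalue $\lambda$. To conclude, I would invoke unimodularity: the characteristic polynomial $\chi(x)=\det(xI-M(\varphi))$ is monic with integer coefficients and $\chi(0)=(-1)^{\Card(A)}\det(M(\varphi))=\pm 1$, so by the rational root theorem its only possible positive integer root is $1$, forcing $k=1$. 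But a primitive substitution cannot have Perron eigenvalue $1$: on a one-letter alphabet primitivity already gives $M(\varphi)=[n]$ with $n>1$, while for $\Card(A)\ge 2$ primitivity yields $M(\varphi)^{N}>0$ for some $N$, whence $\operatorname{tr}(M(\varphi)^{n})\ge\Card(A)\ge 2$ for all $n\ge N$, contradicting $\operatorname{tr}(M(\varphi)^{n})=\sum_{i}\lambda_{i}^{n}\to 1$ that would hold if $\lambda=1$ (all other eigenvalues then having modulus $<1$, again by primitivity). This contradiction proves the proposition.

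The step I expect to be the main obstacle is establishing that $\varphi(w)$ is a cyclic conjugate of a power of $w$: the remainder is routine bookkeeping with Perron--Frobenius and the rational root theorem, but this is the point where the qualitative hypothesis ``$X(\varphi)$ periodic with $w$ primitive'' has to be converted into an exact algebraic relation between $\varphi(w)$ and $w$, and it is precisely here that the Holton--Zamboni analysis of periodic primitive substitutions (or the Fine--Wilf argument sketched above) does the real work.
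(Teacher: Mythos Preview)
Your proof is correct, but it takes a different path from the paper. The paper invokes Holton and Zamboni's result as a black box: their \cite[Corollary~2.7]{Holton1998} reduces aperiodicity to showing that $M(\varphi)$ has an eigenvalue of modulus strictly less than~$1$. The paper then argues the contrapositive purely at the level of eigenvalues: if every eigenvalue had modulus at least~$1$, then since Perron--Frobenius makes the spectral radius $\rho$ simple and strictly dominant (and $\Card(A)\ge 2$ forces a second eigenvalue), one gets $\rho>1$, and the product of the eigenvalues would then satisfy $|\det(M(\varphi))|\ge\rho>1$, contradicting unimodularity.

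By contrast, you bypass the Holton--Zamboni black box and work directly with the period word. Your Fine--Wilf argument that $\varphi(w)$ is a cyclic conjugate of $w^{k}$ is elementary and self-contained; abelianizing then identifies the positive integer $k$ with the Perron eigenvalue, and the rational root theorem applied to the monic integral characteristic polynomial with constant term $\pm 1$ pins down $k=1$, which you rule out by a trace estimate. The upshot is that your argument is longer but stands on its own (Fine--Wilf and Perron--Frobenius only), whereas the paper's argument is a two-line deduction once one is willing to cite Holton--Zamboni. Both routes hinge on Perron--Frobenius; the real difference is whether the combinatorial fact ``periodicity forces integer Perron eigenvalue'' is imported or reproved.
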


\begin{proof}
    First, note that the only unimodular substitution on a one-letter alphabet is the identity, which is not primitive. Thus, we may assume that $\varphi$ is defined on an alphabet with at least two letters. By a result of Holton and Zamboni, it suffices to show that the invertible matrix $M(\varphi)$ has an eigenvalue of modulus less than 1 \cite[Corollary~2.7]{Holton1998}. We argue by contradiction. Suppose that all eigenvalues of $M(\varphi)$ have modulus at least 1 and let $\rho$ be the spectral radius of $M(\varphi)$. Since $M(\varphi)$ is a primitive matrix, it follows from the Perron--Frobenius theorem that $\rho$ is a simple eigenvalue of $M(\varphi)$ with strictly maximal modulus \cite[Theorem~1.2.6]{Fogg2002}. Since $\varphi$ is defined on at least two letters, $M(\varphi)$ has one eigenvalue $\lambda$ distinct from $\rho$, and therefore $1\leq |\lambda|<\rho$. But $\det(M(\varphi))$ is the product of the eigenvalues of $M(\varphi)$, all of which have modulus at least 1, hence $|\det(M(\varphi))|\geq \rho > 1$. This contradicts the unimodularity of $\varphi$. 
\end{proof}

Tying up loose ends, we give a simple example showing that the conclusion of the previous proposition may not hold for substitutions that are $\var{G}_p$-invertible for cofinitely many primes.
\begin{example}
    Consider the following primitive substitution:
    \begin{equation*}
        \begin{array}{rlll}
            \varphi\from & 0 & \mapsto & 02\\
                         & 1 & \mapsto & 21\\
                         & 2 & \mapsto & 10.
        \end{array}
    \end{equation*}
    It is straightforward to check that $\det(M(\varphi))=-2$, hence $\varphi$ is $\var{G}_p$-invertible for all odd primes $p$. Yet, $\varphi$ is periodic, as the language of $\varphi$ consists of the factors in powers of the word $021$.
\end{example}

In the next corollary of Theorem~\ref{t:relinv}, we are able to omit the assumption of aperiodicity thanks to Proposition~\ref{p:aperiodic}.
\begin{corollary}\label{c:nil}
    If $\varphi$ is a primitive unimodular substitution, then $\var{G}_{\nil}$ is contained in $\var{V}(\varphi)$.
\end{corollary}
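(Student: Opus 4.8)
The plan is to deduce this from Corollary~\ref{c:primes} together with the structure theory of finite nilpotent groups, the point being that although $\var{G}_{\nil}$ fails to be extension-closed (so Theorem~\ref{t:relinv} does not apply directly), it is generated as a pseudovariety by the $\var{G}_p$, and each of those \emph{is} handled by Corollary~\ref{c:primes}.

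First I would observe that the hypothesis of aperiodicity needed to invoke Corollary~\ref{c:primes} comes for free: since $\varphi$ is primitive and unimodular, Proposition~\ref{p:aperiodic} tells us that $\varphi$ is aperiodic. Next, unimodularity means precisely that $\det(M(\varphi)) = \pm 1$, so no prime $p$ divides $\det(M(\varphi))$. Applying Corollary~\ref{c:primes} therefore yields $\var{G}_p \subseteq \var{V}(\varphi)$ for \emph{every} prime $p$.

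To finish, I would recall the classical fact that a finite nilpotent group $N$ is the internal direct product of its Sylow subgroups; writing $N \isom \prod_p N_p$ with each $N_p$ a finite $p$-group, we have $N_p \in \var{G}_p \subseteq \var{V}(\varphi)$ for each $p$. Since $\var{V}(\varphi)$ is a pseudovariety, it is closed under finite direct products, so $N \in \var{V}(\varphi)$. As $N$ was an arbitrary member of $\var{G}_{\nil}$, this gives $\var{G}_{\nil} \subseteq \var{V}(\varphi)$, as desired.

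There is no real obstacle here; all the work has been done in the earlier results. The only point that must not be skipped is the appeal to Proposition~\ref{p:aperiodic}, which is exactly what lets us drop the aperiodicity assumption from the statement, and the elementary observation that $\var{G}_{\nil}$ is the join of the pseudovarieties $\var{G}_p$.
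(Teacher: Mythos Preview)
Your proposal is correct and follows essentially the same route as the paper: invoke Proposition~\ref{p:aperiodic} to get aperiodicity, apply Corollary~\ref{c:primes} for every prime (since $\det(M(\varphi))=\pm1$), and conclude using that $\var{G}_{\nil}$ is the join of the $\var{G}_p$. The paper states this last fact directly, while you unpack it via the Sylow decomposition of finite nilpotent groups, but the arguments are the same.
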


\begin{proof}
    Under our assumptions, $\varphi$ is $\var{G}_p$-invertible for all primes $p$, hence $\var{V}(\varphi)$ contains $\var{G}_p$ for all primes $p$ by Corollary~\ref{c:primes}. Since $\var{G}_{\nil}$ is the join (in the lattice of pseudovarieties ordered by inclusion) of the pseudovarieties $\var{G}_p$, where $p$ ranges over all primes, we find $\var{G}_{\nil}\subseteq\var{V}(\varphi)$.
\end{proof}

For the next corollary, which is just Theorem~\ref{t:relinv} with $\var{H}=\var{G}$, it is useful to note that invertible substitutions are unimodular. Indeed, recall from the proof of Corollary~\ref{c:det} that the incidence matrix of an automorphism must be invertible over $\zz$. In particular, we may again omit the aperiodicity assumption.
\begin{corollary}
    If $\varphi$ is a primitive invertible substitution, then $\var{V}(\varphi)$ equals $\var{G}$.
\end{corollary}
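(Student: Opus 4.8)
The plan is to deduce this directly from Theorem~\ref{t:relinv} applied with $\var{H}=\var{G}$, once the aperiodicity hypothesis has been disposed of. The containment $\var{V}(\varphi)\subseteq\var{G}$ is automatic, since $\var{G}$ is by definition the largest pseudovariety of groups. So the only nontrivial point is $\var{G}\subseteq\var{V}(\varphi)$, and for this Theorem~\ref{t:relinv} applies provided $\varphi$ is primitive, aperiodic and $\var{G}$-invertible, and provided $\var{G}$ is a non-trivial extension-closed pseudovariety. The latter is standard: $\var{G}$ is trivially closed under extensions and contains non-trivial groups.

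The two things to check are thus $\var{G}$-invertibility and aperiodicity. For $\var{G}$-invertibility, I would simply invoke the observation made in Section~\ref{s:relfree} that a primitive substitution is $\var{G}$-invertible exactly when it is invertible in the usual sense (via \cite[Proposition~4.6.8]{Almeida2020a}); so this is immediate from the hypothesis. For aperiodicity, the key point is that an invertible substitution is unimodular: as in the proof of Corollary~\ref{c:det}, the incidence matrix is a monoid homomorphism from $\End(\free(A))$ with reversed composition to integer matrices, so when $\varphi$ is an automorphism the matrix $M(\varphi)$ is invertible over $\zz$ and hence $\det(M(\varphi))=\pm1$. Proposition~\ref{p:aperiodic} then yields that $\varphi$ is aperiodic.

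With both hypotheses verified, Theorem~\ref{t:relinv} gives $\var{G}\subseteq\var{V}(\varphi)$, and together with the trivial reverse inclusion this yields $\var{V}(\varphi)=\var{G}$. I expect no real obstacle here: the statement is essentially bookkeeping on top of the section's main theorem, the only mild subtlety being to avoid reintroducing aperiodicity as a standing assumption — which is exactly what Proposition~\ref{p:aperiodic} is there to handle.
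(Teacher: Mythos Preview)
Your proposal is correct and follows essentially the same approach as the paper: apply Theorem~\ref{t:relinv} with $\var{H}=\var{G}$, using that invertibility implies unimodularity (via the incidence-matrix argument from Corollary~\ref{c:det}) and hence aperiodicity by Proposition~\ref{p:aperiodic}, while the reverse inclusion $\var{V}(\varphi)\subseteq\var{G}$ is trivial.
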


Finally, we give an application of Theorem~\ref{t:relinv} to the relative freeness question. For every pseudovariety $\var{H}$, all finite continuous homomorphic images of a pro-$\var{H}$ group lie in $\var{H}$ \cite[Theorem~2.1.3]{Ribes2010a}. In particular, if $G(\varphi)$ is a free pro-$\var{H}$ group, then $\var{V}(\varphi)\subseteq\var{H}$. Combining this observation with the three corollaries stated above yields the following corollary, which is our conclusion for this section.
\begin{corollary}\label{c:relfree}
    Let $\var{H}$ be a pseudovariety and $\varphi$ be a primitive substitution such that $G(\varphi)$ is a free pro-$\var{H}$ group.
    \begin{enumerate}
        \item If $\varphi$ is aperiodic, then $\var{G}_p\subseteq \var{H}$ for every prime $p$ that does not divide the determinant of $M(\varphi)$.
        \item If $\varphi$ is unimodular, then $\var{G}_{\nil}\subseteq\var{H}$.
        \item If $\varphi$ is invertible, then $\var{H}=\var{G}$ and therefore $G(\varphi)$ is absolutely free.
    \end{enumerate}
\end{corollary}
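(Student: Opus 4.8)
The plan is to derive all three items from a single observation, which is already recorded just above the statement: if $G(\varphi)$ is a free pro-$\var{H}$ group, then by \cite[Theorem~2.1.3]{Ribes2010a} every finite continuous homomorphic image of $G(\varphi)$ lies in $\var{H}$, and hence $\var{V}(\varphi)\subseteq\var{H}$. So the entire proof reduces to combining this upper bound on $\var{V}(\varphi)$ with the lower bounds on $\var{V}(\varphi)$ furnished by the corollaries of Theorem~\ref{t:relinv} proved earlier in the section. Each item then becomes a one-line chaining of two inclusions.

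Concretely: for item~(1), assuming $\varphi$ aperiodic, Corollary~\ref{c:primes} gives $\var{G}_p\subseteq\var{V}(\varphi)$ for every prime $p$ not dividing $\det(M(\varphi))$, so together with $\var{V}(\varphi)\subseteq\var{H}$ we get $\var{G}_p\subseteq\var{H}$. For item~(2), assuming $\varphi$ unimodular, Corollary~\ref{c:nil} gives $\var{G}_{\nil}\subseteq\var{V}(\varphi)$, whence $\var{G}_{\nil}\subseteq\var{H}$; note that aperiodicity need not be assumed here because Corollary~\ref{c:nil} already incorporates Proposition~\ref{p:aperiodic}.

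For item~(3), assuming $\varphi$ invertible, I would invoke the corollary immediately preceding the statement, which says $\var{V}(\varphi)=\var{G}$ for primitive invertible $\varphi$ (itself resting on the fact that invertible substitutions are unimodular, hence $\var{G}_p$-invertible for all $p$). This gives $\var{G}\subseteq\var{H}$, and since $\var{G}$ is the largest pseudovariety of groups this forces $\var{H}=\var{G}$; a free pro-$\var{G}$ group is by definition absolutely free, so $G(\varphi)$ is absolutely free.

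There is essentially no obstacle: the mathematical content lives entirely in Theorem~\ref{t:relinv} and its three consequences, and Corollary~\ref{c:relfree} is their repackaging via the inclusion $\var{V}(\varphi)\subseteq\var{H}$. The only point worth a word of care is in item~(3), where one must observe that maximality of $\var{G}$ upgrades containment to equality; items~(1) and~(2) need nothing beyond composing two inclusions.
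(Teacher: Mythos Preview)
Your proposal is correct and follows essentially the same approach as the paper: the paper's proof is precisely the observation (recorded just before the statement) that $\var{V}(\varphi)\subseteq\var{H}$ whenever $G(\varphi)$ is free pro-$\var{H}$, combined with Corollary~\ref{c:primes}, Corollary~\ref{c:nil}, and the corollary that $\var{V}(\varphi)=\var{G}$ for invertible $\varphi$. Your handling of item~(3), including the use of maximality of $\var{G}$, matches the intended argument.
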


\section{An invertible substitution with a non-free Schützenberger group}
\label{s:invnonfree}

The aim of this section is to present a primitive invertible substitution whose Schützenberger group is not free, and thus not relatively free by Corollary~\ref{c:relfree}. This constitutes a counterexample to \cite[Corollary~5.7]{Almeida2007}. Let us formally state our conclusion. 
\begin{theorem}\label{t:counter}
    There exists an invertible primitive substitution whose Schützenberger group is not a relatively free profinite group.
\end{theorem}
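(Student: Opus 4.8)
The plan is to exhibit an explicit invertible primitive substitution $\varphi$, compute one of its return substitutions $\retsubs{\varphi}{u,v}$, and check that the incidence matrix $M(\retsubs{\varphi}{u,v})$ has determinant of absolute value greater than~$1$. By Theorem~\ref{t:return} the return substitution defines an $\omega$\hyp{}presentation of $G(\varphi)$, so Corollary~\ref{c:det} gives that $G(\varphi)$ is not a free profinite group; since $\varphi$ is invertible, Corollary~\ref{c:relfree}(3) then rules out $G(\varphi)$ being a free pro-$\var{H}$ group for any pseudovariety $\var{H}$, which is exactly the assertion. The point that makes this possible is that although an invertible substitution is unimodular, the return substitution is merely the restriction of $\widehat{\varphi}^{\,k}$ to the free subgroup of $\free(A)$ generated by the return code $\ret{u,v}$, and the restriction of an automorphism to a finitely generated subgroup it preserves need not be surjective onto that subgroup; consequently its incidence matrix, taken in the return-word basis, may well have determinant different from $\pm1$.

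First I would locate a suitable candidate. On a two-letter alphabet every invertible primitive substitution defines a Sturmian (hence dendric) shift, so its Schützenberger group is free by \cite[Theorem~6.5]{Almeida2016}; the example therefore has to live on an alphabet with at least three letters. I would search among compositions of elementary Nielsen automorphisms of $\free(A_3)$ (or $\free(A_4)$) whose incidence matrix is primitive, keeping $\varphi$ in factored form so that invertibility stays transparent. Aperiodicity requires no separate check: an invertible substitution is unimodular, and a primitive unimodular substitution is aperiodic by Proposition~\ref{p:aperiodic}, so Theorem~\ref{t:return} and Corollary~\ref{c:relfree} apply unconditionally.

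With $\varphi$ fixed, I would choose a connection $(u,v)$ of small order — a pair of letters, or short words, with $uv\in L(\varphi)$, with $\varphi^l(u)$ ending in $u$ and $\varphi^l(v)$ starting with $v$ — and compute $\retsubs{\varphi}{u,v}$ by the standard procedure: enumerate the return words to $(u,v)$ occurring in a long enough prefix $u\varphi^{nk}(v)$, order them by leftmost occurrence to fix the relabelling $\loo{u,v}\from A_{u,v}\to\ret{u,v}$, and for each $a\in A_{u,v}$ factor $\varphi^k(\loo{u,v}(a))$ over the code $\ret{u,v}$ to read off $\retsubs{\varphi}{u,v}(a)$. From this data I would compute $\det(M(\retsubs{\varphi}{u,v}))$; the construction succeeds as soon as this determinant has absolute value exceeding~$1$, and then the chain Theorem~\ref{t:return} $\Rightarrow$ Corollary~\ref{c:det} $\Rightarrow$ Corollary~\ref{c:relfree}(3) delivers the theorem.

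The main obstacle is the first step: producing a substitution that is simultaneously an automorphism of the free group and primitive, and for which some easily identified connection yields a return substitution with non-unimodular incidence matrix. Invertibility is the fragile constraint — small modifications of a candidate tend to destroy it — so I would keep $\varphi$ in elementary-automorphism form throughout and vary only within that class, using the factorization both to certify invertibility and to keep $\varphi$ explicit. The return-substitution computation itself is finite and elementary, but it must be carried out carefully: a slip in the leftmost-occurrence ordering or in the factorization over the code would propagate directly into the determinant and invalidate the conclusion.
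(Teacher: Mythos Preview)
Your logical chain (Theorem~\ref{t:return} $\Rightarrow$ Corollary~\ref{c:det} $\Rightarrow$ Corollary~\ref{c:relfree}(3)) is sound, but the proposal has a genuine gap: you never produce the example, and the existence of an invertible primitive $\varphi$ with $|\det(M(\retsubs{\varphi}{u,v}))|>1$ is precisely the hard part. Your heuristic that the restriction of an automorphism to an invariant free subgroup need not be surjective is correct but does not settle this; it is not at all clear that a search among Nielsen words will terminate successfully.

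The paper's own example shows why your shortcut may simply fail. For the substitution $\xi$ on $A_4$ used there, the return substitution $\retsubs{\xi}{1,0}$ is on a $7$-letter alphabet and its incidence matrix has determinant~$0$: the free-group identities $\retsubs{\xi}{1,0}(6)=\retsubs{\xi}{1,0}(02^{-1}45^{-1}3)$ and $\retsubs{\xi}{1,0}(4)=\retsubs{\xi}{1,0}(21^{-1}25^{-1}31^{-1}5)$ abelianize to two independent linear dependencies among the rows, forcing the rank down to~$5$. So Corollary~\ref{c:det} does not apply to $\retsubs{\xi}{1,0}$. The paper instead uses Proposition~\ref{p:restriction} to pass to the $5$-generator $\omega$\hyp{}presentation $\rest{\retsubs{\xi}{1,0}}{1}$, whose incidence matrix has determinant~$1$ --- non-zero, but still unusable in Corollary~\ref{c:det} --- and then exhibits, via an explicit folded Stallings automaton, a proper subgroup of $\free(A_5)$ containing $\img(\rest{\retsubs{\xi}{1,0}}{1})$, so that $\rest{\retsubs{\xi}{1,0}}{1}$ is not an automorphism. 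Only then does the full Theorem~\ref{t:det} (the ``$\det\neq 0$ and not an automorphism'' criterion, not its corollary) yield non-freeness. Your plan trades this delicate endgame for an existence claim you have not established; the paper's route shows that the phenomenon you are after may manifest as ``determinant zero at the return level, determinant $\pm1$ after restriction, but still not an automorphism'', which is invisible to Corollary~\ref{c:det} alone.
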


Our example is the following substitution defined on $A_4=\{0,1,2,3\}$:
\begin{equation*}
    \begin{array}{rlll}
        \xi\from & 0 &\mapsto & 001\\
                 & 1 &\mapsto & 02\\
                 & 2 &\mapsto & 301\\
                 & 3 &\mapsto & 320.
    \end{array}
\end{equation*}

Showing that $\xi$ is primitive amounts to a straightforward computation. Moreover, one can show that $\xi$ is invertible by directly checking that
\begin{equation*}
    \begin{array}{rlll}
        \xi^{-1}\from & 0 &\mapsto & 1^{-1}02^{-1}3\\ 
                      & 1 &\mapsto & (3^{-1}20^{-1}1)^20\\ 
                      & 2 &\mapsto & 3^{-1}20^{-1}11\\ 
                      & 3 &\mapsto & 20^{-1}1^{-1}02^{-1}3.
    \end{array}
\end{equation*} 
Since invertible substitutions are unimodular, it follows from Proposition~\ref{p:aperiodic} that $\xi$ is aperiodic.

We proceed to show that $G(\xi)$ is not a free profinite group. In light of Theorem~\ref{t:det}, it suffices to show that $G(\xi)$ admits an $\omega$\hyp{}presentation defined by an endomorphism which is not an automorphism and whose incidence matrix has a non-zero determinant. This boils down to a series of computations organized as follows: 
\begin{enumerate}[label=\emph{Step \arabic*.},align=left]
    \item We compute the return substitution of $\xi$ with respect to the connection $(1,0)$. This defines an $\omega$\hyp{}presentation of $G(\xi)$ with 7 generators.  
    \item We compute the restriction $\rest{\retsubs{\xi}{1,0}}{1}$. This defines an $\omega$\hyp{}presentation of $G(\xi)$ with 5 generators, and moreover the incidence matrix of $\rest{\retsubs{\xi}{1,0}}{1}$ has non-zero determinant.
    \item We show that $\rest{\retsubs{\xi}{1,0}}{1}$ is not an automorphism of $\free(A_5)$.
\end{enumerate}

\subsection*{Step 1}

Let us compute the return substitution of $\xi$ with respect to the connection $(1,0)$. Note that this connection has order 2. For this computation, we use an algorithm described by Durand in \cite[p.5]{Durand2012}. A detailed implementation of Durand's algorithm written in pseudocode may be found in Algorithm~\ref{a:durand}. Given a primitive substitution $\varphi$ with a connection $(u,v)$ of order $k$, Durand's algorithm simultaneously computes the return substitution $\retsubs{\varphi}{u,v}$ and the bijection $\loo{u,v}\from A_{u,v}\to \ret{u,v}$ satisfying $\loo{u,v}\circ\retsubs{\varphi}{u,v} = \varphi^k\circ\loo{u,v}$. In particular, it can also be used to compute the return set.

\begin{algorithm}[t]
    \SetKwFunction{KwList}{list}
    \SetKwFunction{KwPop}{pop}
    \SetKwFunction{KwAppend}{append}
    \SetKw{KwLet}{let}
    \KwData{A primitive substitution $\varphi$ and a connection $(u,v)$ of $\varphi$ of order $k$.}
    \KwResult{The ordering $\loo{u,v}$ and the return substitution $\retsubs{\varphi}{u,v}$.}
    \Begin{
        $w\gets v$\;
        \Repeat{$uv$ occurs twice in $uw$}{
            $w\gets\varphi^k(w)$\;
        }
        \KwLet $\loo{u,v}(0)=\text{ leftmost return word in } uw$\;
        $i \gets 1$ \tcp*{least undefined letter of $\loo{u,v}$}
        $j \gets 0$ \tcp*{least undefined letter of $\retsubs{\varphi}{u,v}$}
        \While{j<i}{
            \ForEach{return word $r$ in $u\varphi^k(\loo{u,v}(j))v$}{
                \If{$r$ is not in $\img(\loo{u,v})$}{
                    \KwLet $\loo{u,v}(i) = r$\;
                    $i \gets i+1$\;
                }
            }
            \KwLet $\retsubs{\varphi}{u,v}(j) = \loo{u,v}^{-1}(\varphi^k(r))$\;
            $j\gets j+1$\;
        }
    }
    \caption{Durand's algorithm for computing return substitutions.}\label{a:durand}
\end{algorithm}

The first part of Algorithm~\ref{a:durand} (lines 1-6) computes the value of $\loo{u,v}(0)$. In the case at hand, we find that $001$ is the leftmost return word in $1\xi^{2}(0) = 100100102$, so $\loo{1,0}(0)=001$. Carrying out the rest of the algorithm yields the following result (see Table~\ref{tb:return} for details): 
\begin{equation*}
    \begin{array}{rlll}
        \loo{1,0}\from & 0 & \mapsto & 001\\
                          & 1 & \mapsto & 02001\\
                          & 2 & \mapsto & 02001301\\
                          & 3 & \mapsto & 02320001\\
                          & 4 & \mapsto & 02001301320301\\
                          & 5 & \mapsto & 02320301\\
                          & 6 & \mapsto & 001320001
    \end{array}\qquad
    \begin{array}{rlll}
        \retsubs{\xi}{1,0}\from & 0 & \mapsto & 00102\\
                       & 1 & \mapsto & 00310102\\
                       & 2 & \mapsto & 003101040002\\
                       & 3 & \mapsto & 003561010102\\
                       & 4 & \mapsto & 00310104000461050002\\
                       & 5 & \mapsto & 003561050002\\
                       & 6 & \mapsto & 0010461010102.
    \end{array}
\end{equation*}

\begin{table}\centering
    \begin{tabular}{>{\small}l|>{\small}l|>{\raggedright\arraybackslash\small}p{0.65\linewidth}}
        $\loo{1,0}^{-1}(r)$ & $r$ & $1\xi^2(r)0$ \\\hline\hline
        0 & $001$ & $1.\allowbreak001.\allowbreak001.\allowbreak02001.\allowbreak001.\allowbreak02001301.\allowbreak0$ \\\hline
        1 & $02001$ & $1.\allowbreak001.\allowbreak001.\allowbreak02320001.\allowbreak02001.\allowbreak001.\allowbreak02001.\allowbreak001.\allowbreak02001301.\allowbreak0$ \\\hline
        2 & $02001301$ & $1.\allowbreak001.\allowbreak001.\allowbreak02320001.\allowbreak02001.\allowbreak001.\allowbreak02001.\allowbreak001.\allowbreak02001301320301.\allowbreak001.\allowbreak001.\allowbreak001.\allowbreak02001301.\allowbreak0$ \\\hline
        3 & $02320001$ & $1.\allowbreak001.\allowbreak001.\allowbreak02320001.\allowbreak02320301.\allowbreak001320001.\allowbreak02001.\allowbreak001.\allowbreak02001.\allowbreak001.\allowbreak02001.\allowbreak001.\allowbreak02001301.\allowbreak0$ \\\hline
        4 & $02001301320301$ & $1.\allowbreak001.\allowbreak001.\allowbreak02320001.\allowbreak02001.\allowbreak001.\allowbreak02001.\allowbreak001.\allowbreak02001301320301.\allowbreak001.\allowbreak001.\allowbreak001.\allowbreak02001301320301.\allowbreak001320001.\allowbreak02001.\allowbreak001.\allowbreak02320301.\allowbreak001.\allowbreak001.\allowbreak001.\allowbreak02001301.\allowbreak0$ \\\hline
        5 & $02320301$ & $1.\allowbreak001.\allowbreak001.\allowbreak02320001.\allowbreak02320301.\allowbreak001320001.\allowbreak02001.\allowbreak001.\allowbreak02320301.\allowbreak001.\allowbreak001.\allowbreak001.\allowbreak02001301.\allowbreak0$ \\\hline
        6 & $001320001$ & $1.\allowbreak001.\allowbreak001.\allowbreak02001.\allowbreak001.\allowbreak02001301320301.\allowbreak001320001.\allowbreak02001.\allowbreak001.\allowbreak02001.\allowbreak001.\allowbreak02001.\allowbreak001.\allowbreak02001301.\allowbreak0$ 
    \end{tabular}
    \caption{Factorization of the words $1\xi^2(r)0$, $r\in\ret{1,0}$, used during the computation of the return substitution $\retsubs{\xi}{1,0}$.}\label{tb:return}
\end{table}

We recall that $\xi$ is primitive and unimodular (since it is invertible), hence it is aperiodic by Proposition~\ref{p:aperiodic}. Therefore, Theorem~\ref{t:return} shows that $\retsubs{\xi}{1,0}$ defines an $\omega$\hyp{}presentation of $G(\xi)$. This $\omega$\hyp{}presentation has 7 generators.

\subsection*{Step 2}

We now compute $\rest{\retsubs{\xi}{1,0}}{1}$, which we recall is the restriction of $\retsubs{\xi}{1,0}$ to the subgroup $\img(\retsubs{\xi}{1,0})$ of $\free(A_7)$. First, we need to find a basis of $\img(\retsubs{\xi}{1,0})$. To do this, it is convenient to recall some notions related with Stallings' algorithm. For a more exhaustive exposition of this topic, we point the reader to \cite{Kapovich2002}.

Let $\automaton{A}$ be a non-deterministic automaton over the alphabet $A$ with a distinguished state $s_0$, serving as both initial and final state. Let us also suppose that $\automaton{A}$ is weakly connected. We allow $\automaton{A}$ to also read words in $(A\cup A^{-1})^*$ in the natural way. More explicitly, if $a\in A$ acts partially on the states of $\automaton{A}$ by $x\mapsto x\cdot a$, then we let $a^{-1}$ act partially on the states of $\automaton{A}$ by
\begin{equation*}
    x\cdot a^{-1} = \{ y \given x\in y\cdot a\}.
\end{equation*}
We say that $\automaton{A}$ is \emph{folded} if no two distinct transitions exist that share the same label as well as the same origin or terminus. When $\automaton{A}$ is folded, it defines a subgroup $H_\automaton{A}$ of $\free(A)$ as follows: $x\in\free(A)$ belongs to $H_\automaton{A}$ if and only if the reduced word of $(A\cup A^{-1})^*$ representing $x$ is accepted by $\automaton{A}$ \cite[Lemma~3.2]{Kapovich2002}. Furthermore, we can obtain a basis for the subgroup $H_\automaton{A}$ as follows. Let $T$ be a spanning tree of $\automaton{A}$. Given two states $x,y\in\automaton{A}$, we denote by $[x,y]_T$ the unique path between $x$ and $y$ in $T$. Let $T'$ be the set of transitions of $\automaton{A}$ that do not belong to $T$. For each $e\in T'$, let $b_e$ be the label of the path $[s_0,x]_T e [y,s_0]_T$, where $x$ and $y$ are respectively the origin and terminus of $e$. Then, the set $X_T = \{ b_e \given e \in T'\}$ is a basis of $H_\automaton{A}$ \cite[Lemma~6.1]{Kapovich2002}.

Let us use this to obtain a basis of $\img(\retsubs{\xi}{1,0})$. First, note the two following equalities, which can be checked with direct computations:
\begin{equation*}
    \retsubs{\xi}{1,0}(6)=\retsubs{\xi}{1,0}(02^{-1}45^{-1}3),\quad \retsubs{\xi}{1,0}(4)=\retsubs{\xi}{1,0}(21^{-1}25^{-1}31^{-1}5).
\end{equation*}
It follows that $\img(\retsubs{\xi}{1,0})$ is generated by $\retsubs{\xi}{1,0}(B)$, where $B = \{0,1,2,3,5\}$. Let
\begin{equation*}
    Y = \{ 0,\ 10^{-1},\ 1^{-1}2,\ 1^{-1}25^{-1}31^{-1}30^{-1},\ 03^{-1}52^{-1}1\}.
\end{equation*} 
It is not hard to see that $Y$ generates $\free(B)$ (it is even a basis of $\free(B)$ since $Y$ and $B$ have the same number of elements). Therefore, $\img(\retsubs{\xi}{1,0})$ is generated by the set
\begin{equation*}
    \retsubs{\xi}{1,0}(Y) = \{00102,\ 00310^{-1},\ 2^{-1}40002,\ 2^{-1}461010^{-1},\ 01^{-1}54^{-1}2\}.
\end{equation*}

\begin{figure}\centering
    \begin{tikzpicture}[scale=.45]
        \node (s0) at (20bp,120bp) [draw,circle, double] {$s_0$};
        \node (s1) at (100bp,40bp) [draw,circle] {$s_1$};
        \node (s2) at (100bp,200bp) [draw,circle] {$s_2$};
        \node (s3) at (200bp,40bp) [draw,circle] {$s_3$};
        \node (s4) at (300bp,40bp) [draw,circle] {$s_4$};
        \node (s5) at (300bp,200bp) [draw,circle] {$s_5$};
        \node (s6) at (100bp,120bp) [draw,circle] {$s_6$};
        \node (s7) at (400bp,40bp) [draw,circle] {$s_7$};
        \node (s8) at (200bp,120bp) [draw,circle] {$s_8$};
        \node (s9) at (400bp,120bp) [draw,circle] {$s_9$};
        \draw [->,dashed] (s0) -- node {$0$} (s1);
        \draw [<-,dashed] (s0) -- node {$2$} (s2);
        \draw [->,dashed] (s1) -- node {$0$} (s3);
        \draw [<-,dashed] (s1) to[bend right] node {$1$} (s4);
        \draw [->,dashed] (s2) -- node {$4$} (s5);
        \draw [<-,dashed] (s2) -- node {$0$} (s6);
        \draw [->] (s3) -- node {$3$} (s4);
        \draw [->] (s3) to node {$1$} (s6);
        \draw [->] (s4) -- node {$5$} (s5);
        \draw [<-,dashed] (s4) -- node {$0$} (s7);
        \draw [->,dashed] (s5) -- node {$0$} (s8);
        \draw [->,dashed] (s5) -- node {$6$} (s9);
        \draw [<-] (s6) -- node {$0$} (s8);
        \draw [<-] (s7) -- node {$1$} (s9);
    \end{tikzpicture}
    \caption{An automaton over the alphabet $A_7$. The distinguished state is identified by a double circle and a spanning tree is highlighted with dashed edges.}
    \label{f:automaton-return}
\end{figure}
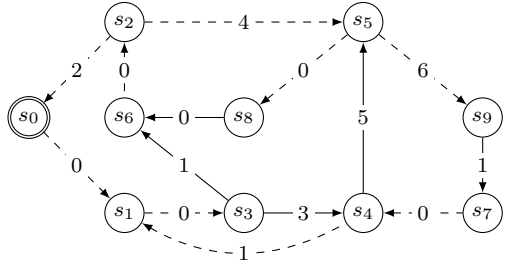

Let $X = \retsubs{\xi}{1,0}(Y)$. We claim that $X$ is a basis of $\img(\retsubs{\xi}{1,0})$. Indeed, consider the automaton $\automaton{A}$ over the alphabet $A_7$ presented in Figure~\ref{f:automaton-return}, where a spanning tree $T$ is highlighted. A direct verification reveals that $\automaton{A}$ is folded and that $X = X_T$, so $X$ is a basis of $\img(\retsubs{\xi}{1,0})$ by \cite[Lemma~6.1]{Kapovich2002}. The restriction $\rest{\retsubs{\xi}{1,0}}{1}$, written in the basis $X$ ordered as above, is (see Table~\ref{tb:restriction} for details): 
\begin{equation*}
    \begin{array}{rlll}
        \rest{\retsubs{\xi}{1,0}}{1}\from & 0 & \mapsto & 00100102\\
                          & 1 & \mapsto & 0014301\\
                          & 2 & \mapsto & 342000102\\
                          & 3 & \mapsto & 3420301001\\
                          & 4 & \mapsto & 4.
    \end{array}
\end{equation*}

\begin{table}
    \begin{tabular}{>{\small}l|>{\raggedright\arraybackslash\small}p{0.7\linewidth}}
        $b$ & $\retsubs{\xi}{1,0}(b)$ \\\hline\hline
		$00102$ & $00102.\allowbreak00102.\allowbreak00310^{-1}.\allowbreak00102.\allowbreak00102.\allowbreak00310^{-1}.\allowbreak00102.\allowbreak2^{-1}40002$ \\\hline
		$00310^{-1}$ & $00102.\allowbreak00102.\allowbreak00310^{-1}.\allowbreak01^{-1}54^{-1}2.\allowbreak2^{-1}461010^{-1}.\allowbreak00102.\allowbreak00310^{-1}$ \\\hline
		$2^{-1}40002$ & $2^{-1}461010^{-1}.\allowbreak01^{-1}54^{-1}2.\allowbreak2^{-1}40002.\allowbreak00102.\allowbreak00102.\allowbreak00102.\allowbreak00310^{-1}.\allowbreak00102.\allowbreak2^{-1}40002$ \\\hline
		$2^{-1}461010^{-1}$ & $2^{-1}461010^{-1}.\allowbreak01^{-1}54^{-1}2.\allowbreak2^{-1}40002.\allowbreak00102.\allowbreak2^{-1}461010^{-1}.\allowbreak00102.\allowbreak00310^{-1}.\allowbreak00102.\allowbreak00102.\allowbreak00310^{-1}$ \\\hline
		$01^{-1}54^{-1}2$ & $01^{-1}54^{-1}2$ 
    \end{tabular}
    \caption{Factorization of the elements $\retsubs{\xi}{1,0}(b)$, $b\in X$, used to compute the restriction $\rest{\retsubs{\xi}{1,0}}{1}$.}\label{tb:restriction}
\end{table}
By Proposition~\ref{p:restriction}, we conclude that $\rest{\retsubs{\xi}{1,0}}{1}$ defines an $\omega$\hyp{}presentation of $G(\xi)$. The incidence matrix of $\rest{\retsubs{\xi}{1,0}}{1}$, which has determinant 1, is given by 
\begin{equation*}
    M(\rest{\retsubs{\xi}{1,0}}{1}) = 
    \begin{pmatrix}
        5 & 2 & 1 & 0 & 0 \\
        3 & 2 & 0 & 1 & 1 \\
        4 & 1 & 2 & 1 & 1 \\
        4 & 2 & 1 & 2 & 1 \\
        0 & 0 & 0 & 0 & 1 \\
    \end{pmatrix}.
\end{equation*}

\subsection*{Step 3}

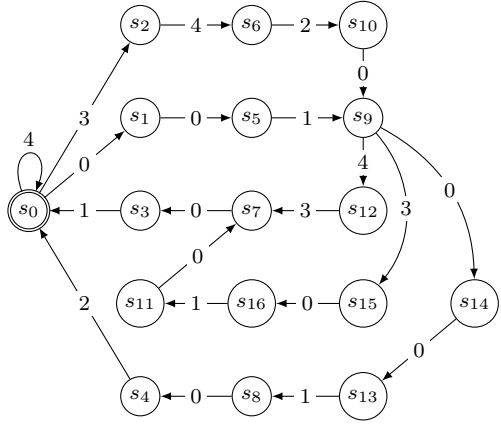
\begin{figure}\centering
    \begin{tikzpicture}[scale=.35]
        \node (s0) at (20bp,100bp) [draw,circle, double] {$s_0$};
        \node (s1) at (140bp,200bp) [draw,circle] {$s_1$};
        \node (s2) at (140bp,300bp) [draw,circle] {$s_2$};
        \node (s3) at (140bp,100bp) [draw,circle] {$s_3$};
        \node (s4) at (140bp,-100bp) [draw,circle] {$s_4$};
        \node (s5) at (260bp,200bp) [draw,circle] {$s_5$};
        \node (s6) at (260bp,300bp) [draw,circle] {$s_6$};
        \node (s7) at (260bp,100bp) [draw,circle] {$s_7$};
        \node (s8) at (260bp,-100bp) [draw,circle] {$s_8$};
        \node (s9) at (380bp,200bp) [draw,circle] {$s_9$};
        \node (s10) at (380bp,300bp) [draw,circle] {$s_{10}$};
        \node (s11) at (140bp,0bp) [draw,circle] {$s_{11}$};
        \node (s12) at (380bp,100bp) [draw,circle] {$s_{12}$};
        \node (s13) at (380bp,-100bp) [draw,circle] {$s_{13}$};
        \node (s14) at (500bp,0bp) [draw,circle] {$s_{14}$};
        \node (s15) at (380bp,0bp) [draw,circle] {$s_{15}$};
        \node (s16) at (260bp,0bp) [draw,circle] {$s_{16}$};
        \draw [->] (s0) to[loop] node[above=1pt] {$4$} (s0);
        \draw [->] (s0) -- node {$0$} (s1);
        \draw [->] (s0) -- node {$3$} (s2);
        \draw [<-] (s0) -- node {$1$} (s3);
        \draw [<-] (s0) -- node {$2$} (s4);
        \draw [->] (s1) -- node {$0$} (s5);
        \draw [->] (s2) -- node {$4$} (s6);
        \draw [<-] (s3) -- node {$0$} (s7);
        \draw [<-] (s4) -- node {$0$} (s8);
        \draw [->] (s5) -- node {$1$} (s9);
        \draw [->] (s6) -- node {$2$} (s10);
        \draw [<-] (s7) to node {$0$} (s11);
        \draw [<-] (s7) -- node {$3$} (s12);
        \draw [<-] (s8) -- node {$1$} (s13);
        \draw [<-] (s9) -- node {$0$} (s10);
        \draw [->] (s9) -- node {$4$} (s12);
        \draw [->] (s9) to[bend left] node {$0$} (s14);
        \draw [->] (s9) to[bend left=40] node {$3$} (s15);
        \draw [<-] (s11) -- node {$1$} (s16);
        \draw [<-] (s13) -- node {$0$} (s14);
        \draw [->] (s15) -- node {$0$} (s16);
    \end{tikzpicture}
    \caption{An automaton over the alphabet $A_5$. The distinguished state is identified by a double circle.}
    \label{f:automaton-restriction}
\end{figure}

To conclude the proof of Theorem~\ref{t:counter}, it remains only to show that $\rest{\retsubs{\xi}{1,0}}{1}$ is not an automorphism of $\free(A_5)$. Consider the automaton $\automaton{A}$ over the alphabet $A_5$ presented in Figure~\ref{f:automaton-restriction}. A simple inspection of each of its 17 states shows that $\automaton{A}$ is folded, hence it defines a proper subgroup $H_\automaton{A}$ of $\free(A_5)$. Moreover, the words $\rest{\retsubs{\xi}{1,0}}{1}(a)$ for $a\in A_5$ are all accepted by $\automaton{A}$, hence $\img(\rest{\retsubs{\xi}{1,0}}{1})\leq H_{\automaton{A}}$. Therefore, $\img(\rest{\retsubs{\xi}{1,0}}{1})$ is also a proper subgroup of $\free(A_5)$ and $\rest{\retsubs{\xi}{1,0}}{1}$ is not an automorphism of $\free(A_5)$.

\section*{Acknowledgements}

I am indebted to Alfredo Costa for many helpful comments which greatly simplified Section~\ref{s:invnonfree} and overall made this paper much clearer. I also wish to thank Jorge Almeida for insightful discussions and for numerous suggestions, some of which led to more general results in Section~\ref{s:relfree}.


\begin{thebibliography}{10}

\bibitem{Almeida2001}
J.~Almeida.
\newblock Dynamics of implicit operations and tameness of pseudovarieties of
  groups.
\newblock {\em Trans. Amer. Math. Soc.}, 354(1):387--411, 2001.

\bibitem{Almeida2002a}
J.~Almeida.
\newblock Dynamics of finite semigroups.
\newblock In {\em Semigroups, Algorithms, Automata and Languages ({C}oimbra,
  2001)}, pages 269--292. World Scientific, 2002.

\bibitem{Almeida2005}
J.~Almeida.
\newblock Profinite semigroups and applications.
\newblock In V.~Kudryavtsev and I.~G. Rosenberg, editors, {\em Structural
  {Theory} of {Automata}, {Semigroups}, and {Universal} {Algebra}}, volume 207
  of {\em {NATO} {Science} {Series} {II}: {Mathematics}, {Physics} and
  {Chemistry}}, pages 1--45, Dordrecht, 2005. Springer.

\bibitem{Almeida2007}
J.~Almeida.
\newblock Profinite groups associated with weakly primitive substitutions.
\newblock {\em J. Math. Sci.}, 144(2):3881--3903, 2007.
\newblock Translated from Fundam. Prikl. Mat., Vol. 11, No. 3, pp. 13--48,
  2005.

\bibitem{Almeida2013}
J.~Almeida and A.~Costa.
\newblock Presentations of {S}ch\"utzenberger groups of minimal subshifts.
\newblock {\em Israel J. Math.}, 196(1):1--31, 2013.

\bibitem{Almeida2016}
J.~Almeida and A.~Costa.
\newblock A geometric interpretation of the {S}ch\"utzenberger group of a
  minimal subshift.
\newblock {\em Ark. Mat.}, 54(2):243--275, 2016.

\bibitem{Almeida2020a}
J.~Almeida, A.~Costa, R.~Kyriakoglou, and D.~Perrin.
\newblock {\em Profinite Semigroups {a}nd Symbolic Dynamics}.
\newblock Springer International Publishing, 2020.

\bibitem{Berthe2015}
V.~Berth{\'{e}}, C.~De Felice, F.~Dolce, J.~Leroy, D.~Perrin, C.~Reutenauer,
  and G.~Rindone.
\newblock Acyclic, connected and tree sets.
\newblock {\em Monatsh. Math.}, 176(4):521--550, 2015.

\bibitem{Berthe2015b}
V.~Berth{\'{e}}, C.~De Felice, F.~Dolce, J.~Leroy, D.~Perrin, C.~Reutenauer,
  and G.~Rindone.
\newblock Bifix codes and interval exchanges.
\newblock {\em J. Pure Appl. Algebra}, 219(7):2781--2798, 2015.

\bibitem{Berthe2015a}
V.~Berth{\'{e}}, C.~De Felice, F.~Dolce, J.~Leroy, D.~Perrin, C.~Reutenauer,
  and G.~Rindone.
\newblock Maximal bifix decoding.
\newblock {\em Discrete Math.}, 338(5):725--742, 2015.

\bibitem{Costa2006}
A.~Costa.
\newblock Conjugacy invariants of subshifts: An approach from profinite
  semigroup theory.
\newblock {\em Int. J. Algebra Comput.}, 16(4):629--655, 2006.

\bibitem{Costa2011}
A.~Costa and B.~Steinberg.
\newblock Profinite groups associated to sofic shifts are free.
\newblock {\em Proc. London Math. Soc.}, 102(3):341--369, 2011.

\bibitem{Coulbois2003}
T.~Coulbois, M.~Sapir, and P.~Weil.
\newblock A note on the continuous extensions of injective morphisms between
  free groups to relatively free profinite groups.
\newblock {\em Publ. Mat.}, 47(2):477--487, 2003.

\bibitem{Durand1998}
F.~Durand.
\newblock A characterization of substitutive sequences using return words.
\newblock {\em Discrete Math.}, 179(1-3):89--101, 1998.

\bibitem{Durand2012}
F.~Durand.
\newblock {HD0L}-{$\omega$}-equivalence and periodicity problems in the
  primitive case (to the memory of {G}. {R}auzy).
\newblock {\em Unif. Distrib. Theory}, 7(1):199--215, 2012.

\bibitem{Durand1999}
F.~Durand, B.~Host, and C.~Skau.
\newblock Substitutional dynamical systems, {B}ratteli diagrams and dimension
  groups.
\newblock {\em Ergodic Theory Dynam. Systems}, 19(4):953--993, 1999.

\bibitem{Fogg2002}
N.~P. Fogg.
\newblock {\em Substitutions in Dynamics, Arithmetics and Combinatorics}.
\newblock Springer Berlin Heidelberg, 2002.

\bibitem{Holton1998}
C.~Holton and L.~Q. Zamboni.
\newblock Geometric realizations of substitutions.
\newblock {\em Bull. Soc. Math. Fr.}, 126(2):149--179, 1998.

\bibitem{Hunter1983}
R.~P. Hunter.
\newblock Some remarks on subgroups defined by the {Bohr} compactification.
\newblock {\em Semigr. Forum}, 26(1):125--137, 1983.

\bibitem{Kapovich2002}
I.~Kapovich and A.~Myasnikov.
\newblock {S}tallings foldings and subgroups of free groups.
\newblock {\em J. Algebra}, 248(2):608--668, 2002.

\bibitem{Lubotzky2001}
A.~Lubotzky.
\newblock Pro-finite presentations.
\newblock {\em J. Algebra}, 242(2):672--690, 2001.

\bibitem{Margolis2001}
S.~Margolis, M.~Sapir, and P.~Weil.
\newblock Closed subgroups in pro-{V} topologies and the extension problem for
  inverse automata.
\newblock {\em Int. J. Algebra Comput.}, 11(04):405--445, 2001.

\bibitem{Neumann1959}
B.~H. Neumann and H.~Neumann.
\newblock Embedding theorems for groups.
\newblock {\em J. London Math. Soc.}, s1-34(4):465--479, 1959.

\bibitem{Neumann1967}
H.~Neumann.
\newblock {\em Varieties of Groups}.
\newblock Springer Berlin Heidelberg, 1967.

\bibitem{Ribes1994}
L.~Ribes and P.~A. Zaleskii.
\newblock The pro-p topology of a free groups and algorithmic problems in
  semigroups.
\newblock {\em Int. J. Algebra Comput.}, 04(03):359--374, 1994.

\bibitem{Ribes2010a}
L.~Ribes and P.~Zalesskii.
\newblock {\em Profinite Groups}.
\newblock Springer Berlin Heidelberg, second edition, 2010.

\bibitem{Steinberg2010}
B.~Steinberg.
\newblock On the endomorphism monoid of a profinite semigroup.
\newblock {\em Port. Math.}, 68:177--183, 2010.

\end{thebibliography}
\end{document}